\documentclass[12pt]{article}

\usepackage{amsmath}
\usepackage{amsthm}
\usepackage{amsfonts}
\usepackage{amssymb}

\newtheorem{theorem}{Theorem}[section]
\newtheorem{lemma}[theorem]{Lemma}

\newtheorem{proposition}[theorem]{Proposition}

\newtheorem{rem}[theorem]{Remark}

\newcommand{\od}{\overset{d}{=}}

\newcommand{\lin}{\underset{n\rightarrow\infty}{\lim}}

\newcommand{\mmp}{\mathbb{P}}
\newcommand{\me}{\mathbb{E}}
\newcommand{\mn}{\mathbb{N}}

\newcommand{\mr}{\mathbb{R}}

\newcommand{\wh}{\widehat}
\newcommand{\1}{\mathbf{1}}
\newcommand\eps{\varepsilon}
\newcommand\vth{\vartheta}

\newcommand{\mm}{\mathcal{M}}

\begin{document}

\title{Exponential rate of $L_p$- convergence of intrinsic martingales in supercritical branching random
walks\footnote{A.Iksanov, S. Polotsky and U. R\"{o}sler were
supported by the German Research Foundation (project no.\ 436UKR
113/93/0-1). The research leading to the present paper has been
mainly conducted during visits to University of Kiev
(R\"{o}sler), to University of Kiel (Iksanov and Polotsky), and to
University of M\"unster (Iksanov). Financial support obtained from
these institutions is gratefully acknowledged.}}\date{}
\author{Gerold Alsmeyer\footnote{e-mail address:
gerolda@math.uni-muenster.de}\\
\small{\emph{Institut f\"ur Mathematische Statistik},
\emph{Westf\"alische Wilhelms-Universit\"at M\"unster}},\\
\small{\emph{Einsteinstra\ss e 62, D-48149 M\"unster, Germany}}
\vspace{.3cm}\\
Alex Iksanov\footnote{Corresponding author; tel./fax.: +38044
5213202\newline e-mail address: iksan@unicyb.kiev.ua} and Sergej
Polotsky\footnote{e-mail address:
sergiy.polotskiy@gmail.com}\\
\small{\emph{Faculty of Cybernetics},
\emph{National T. Shevchenko University of Kiev}},\\
\small{\emph{01033 Kiev, Ukraine}}\vspace{.3cm}\\
Uwe R\"{o}sler\footnote{e-mail address:
roesler@math.uni-kiel.de}\\\small{\emph{Mathematisches Seminar},
\emph{Christian-Albrechts-Universit\"{a}t zu Kiel}},\\
\small{\emph{Ludewig-Meyn-Str.\ 4, D-24098 Kiel, Germany}}}

\maketitle
\begin{quote}
\noindent {\footnotesize \textbf{SUMMARY.}} Let $W_n, n\in\mn_{0}$
be an intrinsic martingale with almost sure limit $W$ in a
supercritical branching random walk. We provide criteria for the
$L_p$-convergence of the series $\sum_{n\ge 0} e^{an}(W-W_n)$ for
$p>1$ and $a>0$. The result may be viewed as a statement about the
exponential rate of convergence of $\me |W-W_n|^p$ to zero.
\medskip

MSC: Primary: 60J80, 60F25

\small{\emph{Key words}: supercritical branching random walk; weighted branching process; martingale; random series; $L_p$-convergence, Burkholder's inequality}
\end{quote}

\section{Introduction and main results}

We start by recalling a definition of the branching random walk.
Consider~a population starting from one ancestor located at the
origin and evolving like a Galton-Watson process but with the
generalization that individuals may have infinitely many children.
All individuals are residing in points on the real line, and the
displacements of children relative to their mother are described
by a copy of a locally finite point process $\mm=\sum_{i=1}^J
\delta_{X_i}$ on $\mr$, and for different mothers these copies are
independent. Note once again that the random variable $J=\mm(\mr)$
giving the offspring number may be infinite with positive
probability. For $n\in\mn_{0}:=\{0,1,...\}$ let $\mm_n$ be the
point process that defines the positions on $\mr$ of the
individuals of the $n$-th generation. The sequence $\mm_n,
n\in\mn_{0}$ is called a \emph{branching random walk (BRW)}. In
what follows we always assume that $\me J>1$ (supercriticality)
which ensures survival of the population with positive
probability.

Every BRW is uniquely associated with a \emph{weighted branching
process (WBP)} to be formally introduced next: Let
$\mathbf{V}:=\bigcup_{n\ge 0}\mn^n$ be the infinite Ulam-Harris
tree of all finite sequences $v=v_1...v_n$ with root $\varnothing$
($\mn^0:=\{\varnothing\}$) and edges connecting each
$v\in\mathbf{V}$ with its successors $vi$, $i=1,2,...$ The length
of $v$ is denoted as $|v|$. Call $v$ an individual and $|v|$ its
generation number. Associate with every edge $(v,vi)$ of
$\mathbf{V}$ a nonnegative random variable $L_{i}(v)$ (weight) and
define recursively $L_{\varnothing}:=1$ and
$L_{vi}:=L_{i}(v)L_{v}$. The random variable $L_{v}$ may be
interpreted as the total multiplicative weight assigned to the
unique path from the root $\varnothing$ to $v$. For any
$u\in\mathbf{V}$, put similarly $L_{\varnothing}(u):=1$ and
$L_{vi}(u):=L_{i}(v)L_{v}(u)$. Then $L_{v}(u)$ gives the total
weight of the path from $u$ to $uv$. Provided that $L_{i}(v),
v\in\mathbf{V},i\in\mn$ consists of i.i.d.\ random variables, the
pair $(\mathbf{V}, \mathbf{L})$ with $\mathbf{L}:=(L_{v}(w)),
v\in\mathbf{V}, w\in \mathbf{V}$ is called a WBP with associated
BRW $\mm_n, n\in\mn_{0}$ defined as
$\mm_{n}=\sum_{|v|=n}\delta_{\log L_{v}}(\cdot\cap\mr)$. The $\log
L_{v}>-\infty$ for $v\in\mn^{n}$ are thus the positions of the
individuals alive in generation $n$. Note that, if
$u\mathbf{V}:=\{uv:v\in\mathbf{V}\}$ denotes the subtree of
$\mathbf{V}$ rooted at $u$, then the WBP on this subtree  is given
by $(u\mathbf{V},\mathbf{L}(u))$, where
$\mathbf{L}(u):=(L_{u}(v)), v\in\mathbf{V}$.

Next define
\begin{equation*}
Z_{n}\ :=\ \sum_{|v|=n}L_{v}\quad\text{and}\quad m(r)\ :=\ \me
\sum_{|v|=n}L^r_{v}
\end{equation*}
for $n\in\mn_{0}$, $r>0$ and suppose that $m(1)<\infty$. If $m$ is differentiable at $r$, then
\begin{equation}\label{xlogx}
m^\prime(r)\ =\ \me\Bigg(\sum_{|v|=1}L^r_{v}\log L_{v} \Bigg).
\end{equation}
In those cases where the right hand expectation exists but is $-\infty$ or $+\infty$ (which can only happen when $r$ is a left or right endpoint of the possibly degenerate interval
$\{r:m(r)<\infty\}$) we take \eqref{xlogx} as the definition of $m^\prime(r)$.

Let $\mathcal{F}_0$ be the trivial $\sigma$-field,
$\mathcal{F}_n:= \sigma(L_{i}(v):i\in\mn,|v|<n)$ for $n\in\mn$ and
$\mathcal{F}_{\infty}:=\sigma(\mathcal{F}_{n}:n\in\mn_{0})$. The
sequence $(W_{n},\mathcal{F}_{n}), n\in\mn_{0}$, where
\begin{equation}\label{Wn2000}
W_n\ :=\ \frac{Z_{n}}{m^{n}(1)},
\end{equation}
forms a nonnegative martingale with mean one and is thus a.s.\
convergent to a limiting variable $W$, say, satisfying $\me W \le
1$. It has been extensively studied in the literature, but first
results were obtained in \cite{King} and \cite{Big1}. Note that
$\mmp\{W>0\}>0$ if, and only if, $W_n, n\in\mn_{0}$ is uniformly
integrable. An ultimate~uniform integrability criterion was given
in \cite{AlsIks}, earlier results can be found in \cite{Big1},
\cite{Lyons}, \cite{Liu} and \cite{Iks04}.

Possibly after switching to the WBP
$(\mathbf{V},(L_{v}(w)/m^{|v|}(1), v,w \in\mathbf{V}))$ it is no
loss of generality to assume throughout that
$$ m(1)=1. $$
We further impose the condition
\begin{equation}\label{tr}
\mmp\{W_1=1\}<1
\end{equation}
which avoids the trivial situation where $\mmp\{W_n=1\}$ for
all $n\in\mn$ and hence $\mmp\{W=1\}=1$.

Other WBPs appearing in this work are the afore-mentioned
$(u\mathbf{V},\mathbf{L}(u))$ for any $u\in\mathbf{V}$ and
$(\mathbf{V},\mathbf{L}^{r})$, where
$\mathbf{L}^{r}:=(L_{v}^{r}(w)), v,w\in\mathbf{V}$. The
counterparts of $Z_{n},W_{n}$ for these processes are denoted
$Z_{n}(u),W_{n}(u)$ and $Z_{n}^{(r)},W_{n}^{(r)}$, respectively,
so $Z_{n}(u):=\sum_{|v|=n}L_{v}(u)$,
$Z_{n}^{(r)}:=\sum_{|v|=n}L_{v}^{r}$ and
$W_{n}^{(r)}:={Z_n^{(r)}\over m^n(r)}$.

The main results of this paper will provide necessary and sufficient conditions for the
$L_p$-convergence ($p>1$) of the series
\begin{equation}\label{series}
A\ :=\ \sum_{n\ge 0} e^{an}(W-W_n),
\end{equation}
for fixed $a>0$. More precisely, we will derive equivalent
necessary and sufficient conditions in the simpler case $p\ge 2$,
while a necessary and a slightly stronger sufficient condition are
presented in the surprisingly intriguing case $1<p<2$. Plainly,
our results give information on the rate of convergence of $\me
|W-W_n|^p$ to zero, as $n\to\infty$. It is therefore useful to
recall conditions (which can be found in \cite[Theorem
2.1]{Liu2000}, \cite[Corollary 5]{Iks04} or \cite[Theorem
3.1]{AlsKuhl}) ensuring that this expectation does go to $0$ or,
equivalently, that the martingale $\{W_n:n\in\mn_{0}\}$ converges
in $L_p$.

\begin{proposition}\label{lp}
Suppose (\ref{tr}) and $p>1$. Then the conditions
$$\me W_1^p<\infty \
\ \text{and} \ \ m(p)<1$$
are necessary and sufficient for
$$\lin \me |W-W_n|^p=0, $$
and the latter is equivalent to $\underset{n\geq 0}{\sup}\,\me
W_n^p<\infty$ as well as to $\me W^{p}\in (0,\infty)$.
\end{proposition}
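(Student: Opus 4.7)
The plan is to establish the equivalence of four conditions for $p>1$: (i) $\me W_1^p<\infty$ and $m(p)<1$, (ii) $\lin\me|W-W_n|^p=0$, (iii) $\suin \me W_n^p<\infty$, and (iv) $\me W^p\in(0,\infty)$, by exploiting the branching recursion $W_{n+1}=\sum_{|v|=1}L_v\,W_n(v)$, where the $W_n(v)$ are i.i.d.\ copies of $W_n$ attached to the subtrees $(v\mathbf{V},\mathbf{L}(v))$ and independent of $(L_v)_{|v|=1}$. This yields the martingale decomposition
\[
W_{n+1}-W_n\ =\ \sum_{|v|=n}L_v\bigl(W_1(v)-1\bigr)
\]
with conditionally centered, conditionally i.i.d.\ summands given $\mathcal F_n$, which is the starting point for the sharp inequalities below.

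The chain (iii)$\Rightarrow$(ii)$\Rightarrow$(iv) is standard martingale theory: $L_p$-boundedness of the nonnegative martingale $(W_n)$ implies uniform integrability in $L_p$, so Doob's $L_p$-inequality yields $L_p$-convergence to the a.s.\ limit $W$; since $\me W_n=1$ for every $n$, the $L_p$-limit satisfies $\me W=1$ and hence $\me W^p\in(0,\infty)$. The implication (ii)$\Rightarrow$(i) splits naturally: $\me W_1^p\leq\suin \me W_n^p<\infty$ is trivial, while the inequality $m(p)\leq 1$ would be derived from the distributional fixed-point identity $W\od\sum_{|v|=1}L_v\,W(v)$ (with $W(v)$ i.i.d.\ copies of $W$) combined with the elementary $(\sum a_i)^p\geq\sum a_i^p$ valid for nonnegative $a_i$ and $p\geq 1$, which yields $\me W^p\geq m(p)\me W^p$.

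The main implication (i)$\Rightarrow$(iii), which simultaneously furnishes (ii) via the geometric control it provides, I would handle by applying, conditionally on $\mathcal F_n$, the Topchi\u{\i}--Vatutin (von Bahr--Esseen) inequality in the case $1<p\leq 2$ to obtain
\[
\me|W_{n+1}-W_n|^p\ \leq\ C_p\,m(p)^n\,\me|W_1-1|^p,
\]
whose geometric summability (using $m(p)<1$ and $\me|W_1-1|^p<\infty$) makes $(W_n)$ Cauchy in $L_p$ and simultaneously gives $\suin \me W_n^p<\infty$. For $p\geq 2$ I would instead use Burkholder's inequality to reduce to a quadratic variation, or more directly apply Rosenthal's inequality to the first-generation decomposition to obtain a recursive bound $\me W_{n+1}^p\leq m(p)\,\me W_n^p+K$ with $m(p)<1$; iteration of this bound yields $\suin \me W_n^p<\infty$.

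The main obstacle I anticipate is upgrading $m(p)\leq 1$ to the strict inequality $m(p)<1$ in the necessity direction: the inequality $(\sum a_i)^p\geq\sum a_i^p$ is not strict in general, so ruling out the boundary case $m(p)=1$ requires condition (\ref{tr}) together with a strict-convexity refinement. Concretely, under (\ref{tr}) one shows that at least two of the summands $L_v W(v)$ in the fixed-point equation are strictly positive on a set of positive probability, so that strict convexity of $x\mapsto x^p$ applied to the conditional Jensen step yields $\me W^p>m(p)\me W^p$, contradicting equality. This quantitative non-degeneracy step is the only place where (\ref{tr}) enters essentially.
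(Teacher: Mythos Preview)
The paper does not supply its own proof of this proposition; it is quoted as a known result with references to \cite{Liu2000}, \cite{Iks04}, and \cite{AlsKuhl}. Your outline follows the standard route taken in those sources and is essentially correct.

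Two small points are worth tightening. First, your chain (i)$\Rightarrow$(iii)$\Rightarrow$(ii)$\Rightarrow$(iv) together with (ii)$\Rightarrow$(i) leaves (iv) as a terminal node; you still owe the reader (iv)$\Rightarrow$(iii). This is short: $\me W^p>0$ forces $\mmp\{W>0\}>0$, hence uniform integrability of $(W_n)$ by the dichotomy stated just before the proposition, whence $W_n=\me(W\mid\mathcal F_n)$ and conditional Jensen give $\me W_n^p\le\me W^p<\infty$ for every $n$. Second, your treatment of the boundary case $m(p)=1$ is slightly loose: condition \eqref{tr} alone does not guarantee that two weights are positive with positive probability (a single random weight is not excluded). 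The clean argument runs as follows: equality a.s.\ in $\bigl(\sum a_i\bigr)^p\ge\sum a_i^p$ forces at most one positive summand; combined with independence of the $W(v)$ from $(L_v)_{|v|=1}$ and $\mmp\{W>0\}>0$ this forces at most one $L_v>0$ a.s.; and then $m(1)=m(p)=1$ for a single nonnegative summand together with strict convexity of $x\mapsto x^p$ force that summand to equal $1$ a.s., i.e.\ $W_1\equiv 1$, contradicting \eqref{tr}.
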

Now we are ready to formulate our main results.

\begin{theorem}\label{main1}
Suppose (\ref{tr}), $a>0$ and $p\in (1,2)$. Then $A$ converges in
$L_p$ and almost surely if\begin{equation}\label{0} \me
W_1^{r}<\infty\quad\text{and}\quad e^a m^{1/r}(r)< 1\text{ for
some }r\in [p,2].
\end{equation}
Conversely, the $L_{p}$-convergence of $A$  implies
\begin{equation}\label{00}
\me W_1^p<\infty\quad\text{and}\quad \inf_{r\in [p,2]}e^a m^{1/r}(r)\le 1.
\end{equation}
\end{theorem}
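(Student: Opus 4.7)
\noindent\emph{Proof proposal.} I will base the argument on two representations of $A$: the branching decomposition
$$W - W_n\ =\ \sum_{|v|=n} L_v(W(v)-1),$$
where $(W(v))_{|v|=n}$ are i.i.d.\ copies of $W$ independent of $\mathcal{F}_n$, which makes $W-W_n$ a conditional sum of centred i.i.d.\ variables and opens the way to von Bahr--Esseen-type bounds; and the summation-by-parts identity
$$\sum_{n=0}^N e^{an}(W-W_n)\ =\ \frac{1}{e^a-1}\sum_{k=0}^{N-1}(e^{a(k+1)}-1)(W_{k+1}-W_k) + \frac{e^{a(N+1)}-1}{e^a-1}(W-W_N),$$
which recasts the partial sums of $A$ as a weighted martingale plus a remainder.

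For the sufficient direction, I fix $r\in[p,2]$ with $\me W_1^r<\infty$ and $e^a m^{1/r}(r)<1$, so Proposition \ref{lp} gives $W_n\to W$ in $L_r$, in particular $\me W=1$ and $\me|W-1|^r<\infty$. Applying von Bahr--Esseen conditionally on $\mathcal{F}_n$ to the branching decomposition yields $\|W-W_n\|_r\leq C(m^{1/r}(r))^n$; Lyapunov transfers the bound to the $L_p$-norm, and Minkowski produces $\|A\|_p\leq C\sum_n(e^a m^{1/r}(r))^n<\infty$. The same method applied to $W_{k+1}-W_k=\sum_{|v|=k}L_v(W_1(v)-1)$ gives $\|W_{k+1}-W_k\|_p\leq C(m^{1/r}(r))^k$, whereupon Burkholder combined with the subadditivity $(\sum x_i^2)^{p/2}\leq\sum x_i^p$ (valid for $p\leq 2$) shows that the weighted martingale $\sum_{k<N}(e^{a(k+1)}-1)(W_{k+1}-W_k)$ is $L_p$-bounded, hence a.s.\ convergent by Doob. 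The remainder vanishes a.s.\ because $\sum_n e^{apn}\me|W-W_n|^p<\infty$ forces $e^{an}(W-W_n)\to 0$ a.s.\ (Tonelli).

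For the necessary direction, $L_p$-convergence of $A$ forces its first partial sum $A_0=W-1$ to lie in $L_p$, so $\me W^p<\infty$; since $W\ge 0$ cannot be identically zero (else $-A=\sum_n e^{an}W_n$ would have $L_1$-norm $\sum_n e^{an}=\infty$), Proposition \ref{lp} yields both $\me W_1^p<\infty$ and $m(p)<1$. For the $\inf$ condition I use the fixed-point identity $A=(W-1)+e^a\sum_{|u|=1}L_u A(u)$ (valid because the $A(u)$ attached to first-generation individuals are i.i.d.\ copies of $A$ independent of $\mathcal{F}_1$); iterating it $N$ times gives
$$A-A_{N-1}\ =\ e^{aN}\sum_{|v|=N}L_v A(v),$$
so the $L_p$-Cauchy property of $(A_N)$ forces $e^{aN}\bigl\|\sum_{|v|=N}L_v A(v)\bigr\|_p\to 0$.

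The main obstacle is converting this vanishing into $e^a m^{1/r}(r)\leq 1$ for some admissible $r\in[p,2]$. Conditionally on $\mathcal{F}_N$, $\sum_{|v|=N}L_v A(v)$ is a sum of centred i.i.d.\ variables, and the Marcinkiewicz--Zygmund two-sided inequality lower-bounds its $L_p$-norm by a constant times $\me[(\sum_{|v|=N}L_v^2 A(v)^2)^{p/2}]^{1/p}$. Extracting the correct exponential rate is delicate: $r=2$ is unavailable unless $A\in L_2$, and a naive use of $r=p$ gives too weak a bound. My plan is to set $r_0:=\sup\{s\in[p,2]:A\in L_s\}$ and to combine a Paley--Zygmund-type estimate applied to the intrinsic martingale $W_N^{(r_0)}=Z_N^{(r_0)}/m^N(r_0)$ with the branching structure, producing $\|\sum_{|v|=N}L_v A(v)\|_p\ge c\,(m^{1/r_0}(r_0))^N$ for all large $N$; plugged back into the vanishing above, this yields $e^a m^{1/r_0}(r_0)\leq 1$, and hence the claimed $\inf\leq 1$.
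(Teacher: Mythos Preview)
Your sufficiency argument is correct and close to the paper's: both exploit, for some $r\in[p,2]$, the subadditivity of $x\mapsto x^{r/2}$ (equivalently, a von Bahr--Esseen bound) to obtain $\|W-W_n\|_r\le C\,m(r)^{n/r}$ and then sum. The paper routes this through the martingale $\wh A=\sum_n e^{an}(W_{n+1}-W_n)$ and Burkholder's inequality, while you apply Minkowski directly to $A$; the difference is cosmetic.

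The necessity argument has a genuine gap. Your iterated identity $A-A_{N-1}=e^{aN}\sum_{|v|=N}L_vA(v)$ is correct, and after Marcinkiewicz--Zygmund and the concave-Jensen trick (with weights $L_v^2/Z_N^{(2)}$) one indeed obtains
\[
\Big\|\sum_{|v|=N}L_vA(v)\Big\|_p^p\ \ge\ c\,\me|A|^p\cdot\me\big(Z_N^{(2)}\big)^{p/2}\ =\ c\,\me|A|^p\cdot s_N(2).
\]
So the whole question reduces to identifying the exponential rate of $s_N(2)=\me(Z_N^{(2)})^{p/2}$. This is precisely the content of the paper's Proposition~\ref{limitchar}, and it is \emph{not} a Paley--Zygmund statement: the limit $\lim_N s_N^{1/N}(2)$ equals $\gamma^p=\min_{r\in[p,2]}m^{p/r}(r)$ when $p\le\vth$ (proved via Biggins' a.s.\ asymptotics for $\max_{|v|=N}L_v$, plus a truncation to handle the boundary case $\vth=2$), but equals $m(p)$ when $p>\vth$ (proved by a rather delicate size-biasing computation under $\wh\mmp^{(p)}$, see Case~D). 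Neither case follows from a second-moment (Paley--Zygmund) bound on $W_N^{(r_0)}$.

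Your choice $r_0:=\sup\{s\in[p,2]:A\in L_s\}$ is also off target. The exponent that governs $s_N(2)$ is $\vth=2\wedge\arg\min_{r\ge 1}m^{1/r}(r)$, a quantity determined entirely by the branching mechanism $m(\cdot)$, not by the integrability of $A$; there is no reason the two should coincide. To complete your necessity proof you will need either Proposition~\ref{limitchar} or an independent derivation of the exponential rate of $s_N(2)$, and the latter is the substantive part of the paper.
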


\begin{rem}\label{supplement}\rm
In the case where the function $r\mapsto m^{1/r}(r)$ attains its
minimum at some $\theta<p$, i.e. $m(\theta)^{1/\theta}<
m^{1/p}(p)$ for some $1<\theta<p$, our analysis will actually show
that the $L_{p}$-convergence of $A$ even implies
\begin{equation*}
\me W_1^p<\infty\quad\text{and}\quad e^a m^{1/p}(p)<1,
\end{equation*}
see Remark \ref{sharper} after the proof of Theorem \ref{main1}.
Similarly, if the function $r\mapsto m^{1/r}(r)$ attains its
minimum at some $\theta\ge 2$, the $L_{p}$-convergence of $A$
implies
\begin{equation*}
\me W_1^p<\infty\quad\text{and}\quad e^a m^{1/2}(2)<1.
\end{equation*}
In other words,
$$ \inf_{r\in [p,2]}e^a m^{1/r}(r)=1 $$
in condition \eqref{00} is possible only if the last infimum is
attained at some $r \in [p,2)$.
\end{rem}

\begin{theorem}\label{main2}
Suppose (\ref{tr}), $a>0$ and $p\ge 2$. Then $A$ converges in $L_{p}$ if, and only if,
\begin{equation}\label{1}
\me W_1^p<\infty \ \ \text{and} \ \ e^a (m^{1/2}(2)\vee m^{1/p}(p)) < 1,
\end{equation}
and in this case $A$ converges also almost surely.
\end{theorem}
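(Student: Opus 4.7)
The whole argument rests on two-sided estimates of the form
\[
c_p\,\bigl(m^n(p)\vee m^{np/2}(2)\bigr)\ \le\ \me|W-W_n|^p\ \le\ C_p\,\bigl(m^n(p)+\me(Z_n^{(2)})^{p/2}\bigr)
\]
for constants $0<c_p\le C_p<\infty$. To derive them I write
\[
W-W_n\ =\ \sum_{|v|=n}L_v\bigl(W(v)-1\bigr),
\]
with $\{W(v)\}_{|v|=n}$ i.i.d.\ copies of $W$ independent of $\mathcal{F}_n$, apply Rosenthal's inequality (whose upper and lower halves both hold for $p\ge 2$) to the conditionally independent mean-zero summands, and take unconditional expectations while invoking $\me\sum_{|v|=n}L_v^r=m^n(r)$ together with $\me(Z_n^{(2)})^{p/2}\ge m^{np/2}(2)$ (Jensen). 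The lower bound additionally uses $\me|W-1|^p<\infty$ and $\sigma^2:=\me(W-1)^2>0$, both ensured by (\ref{tr}) whenever $W\in L_p$.

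For sufficiency I complement the upper estimate with $\me(Z_n^{(2)})^{p/2}\le C(m(p)\vee m^{p/2}(2))^n$. Since $Z_n^{(2)}=m^n(2)\,W_n^{(2)}$, where $W_n^{(2)}$ is the intrinsic martingale of the squared WBP $(\mathbf V,\mathbf L^2)$ with normalised moment function $r\mapsto m(2r)/m^r(2)$, Proposition \ref{lp} applied to this WBP yields $L_{p/2}$-boundedness of $W_n^{(2)}$ (and hence the bound with rate $m^{p/2}(2)$) whenever $m(p)/m^{p/2}(2)<1$; in the opposite regime I invoke the known geometric growth rate $(m(p)/m^{p/2}(2))^n$ of $\me(W_n^{(2)})^{p/2}$, giving the bound with rate $m(p)$. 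Combined with the Rosenthal upper bound this gives $\|W-W_n\|_p\le C(m^{1/p}(p)\vee m^{1/2}(2))^n$, so under (\ref{1}) the series $\sum_{n\ge 0}e^{an}\|W-W_n\|_p$ is a convergent geometric series, proving $L_p$-convergence of $A$. For almost sure convergence I use the Fubini-style rearrangement
\[
A_n\ =\ \sum_{k=0}^{n-1}(W_{k+1}-W_k)\,\frac{e^{a(k+1)}-1}{e^a-1}\ +\ \frac{e^{an}-1}{e^a-1}\,(W-W_n),
\]
which identifies $A$ as the $L_p$-limit of the martingale $B_n$ formed by the first sum plus a remainder that vanishes in $L_p$ because $e^{an}\|W-W_n\|_p\to 0$; the $L_p$-bounds above give $\sup_n\me|B_n|^p<\infty$, so $B_n$ converges almost surely by the $L_p$-martingale convergence theorem.

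For necessity, $L_p$-convergence of the series forces $W-1=W-W_0$ and $e^a(W-W_1)$ to lie in $L_p$, so $W_1=W-(W-W_1)\in L_p$, whence $\me W_1^p<\infty$. It also forces $e^{an}\|W-W_n\|_p\to 0$, i.e.\ $\me|W-W_n|^p=o(e^{-apn})$. Substituting into the lower Rosenthal bound above yields $(m(p)e^{ap})^n=o(1)$ and $(m^{p/2}(2)e^{ap})^n=o(1)$, which is exactly the strict inequality $e^a(m^{1/p}(p)\vee m^{1/2}(2))<1$ in (\ref{1}). The principal obstacle is the upper estimate on $\me(Z_n^{(2)})^{p/2}$ in the regime $m(p)\ge m^{p/2}(2)$, where $W_n^{(2)}$ fails to be $L_{p/2}$-bounded and the argument must exploit the precise geometric growth rate of $\me(W_n^{(2)})^{p/2}$; this is the step for which the moment machinery of the cited papers is most directly required.
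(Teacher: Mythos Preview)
Your argument is correct and runs parallel to the paper's, but the packaging differs in a useful way. The paper passes through the auxiliary martingale $\wh A_n=\sum_{k\le n}e^{ak}(W_{k+1}-W_k)$ and applies Burkholder's inequality to the square function $R=\sum_{n\ge0}e^{2an}(W_{n+1}-W_n)^2$; necessity then comes from $\me R^{p/2}<\infty$ together with superadditivity of $x\mapsto x^{p/2}$ and a second Burkholder step on the inner increment $W_{n+1}-W_n=\sum_{|v|=n}L_v(W_1(v)-1)$. You instead apply the two-sided Rosenthal inequality directly to $W-W_n=\sum_{|v|=n}L_v(W(v)-1)$, which collapses the two Burkholder applications into one step and lets you sum $e^{an}\|W-W_n\|_p$ as a plain geometric series. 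This is slightly more direct and avoids the detour through Lemma~\ref{asa}, at the cost of needing $\me W^p<\infty$ up front (which you correctly extract from Proposition~\ref{lp} for sufficiency and from $A_0=W-1\in L_p$ for necessity). Your almost sure convergence argument via the Fubini rearrangement is exactly the content of Lemma~\ref{asa}; note a harmless index slip: the coefficient of $W-W_n$ should be $b_n=(e^{a(n+1)}-1)/(e^a-1)$ rather than $(e^{an}-1)/(e^a-1)$.

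On the one genuinely nontrivial point you flag---the growth of $\me(W_n^{(2)})^{p/2}$ in the regime $m(p)\ge m^{p/2}(2)$---the paper does \emph{not} import this from the literature but proves it as Lemma~\ref{ma}, obtaining $\me(W_n^{(2)})^{p/2}=O\!\bigl(n^c\,(m(p)/m^{p/2}(2))^n\bigr)$ via a size-biased recursion $V_n\overset{d}{=}MV_{n-1}+T_n$ (derived from Proposition~\ref{spine}) and induction on $\lfloor p\rfloor$. The polynomial factor $n^c$ is harmless for the geometric summability you need, so your conclusion stands, but you should be aware that this estimate is the technical heart of the sufficiency proof and is established within the paper rather than cited.
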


\begin{rem}\rm
Suppose that $A$ in \eqref{series} exists in the sense of
convergence in probability and let $A(v)$ be the corresponding
series for the subtree $v\mathbf{V}$. The $A(v),\ |v|=1$, are
independent copies of $A$ and independent of the $L_v$, $|v|=1$.
Moreover, the equation
\begin{equation}\label{fix}
A\ \od\  e^a\sum_{|v|=1}L_vA(v)+W-1
\end{equation}
holds true (in fact, even with "$=$" instead of "$\od$"). Albeit
looking like a stochastic fixed point equation it is not, for the
$A(v)$ are \emph{not} independent of the random variable $W$.
\end{rem}

\section{Size-biasing and spinal trees}

In the following, we will briefly present some required material on size-biasing
and spinal trees in connection with BRW. Generally speaking, size-biasing has
proved to be a very effective tool from harmonic analysis in the study of various
branching models. Here we restrict ourselves to a rather
informal description of those facts that are needed in this article.

Let $(\Omega,\mathcal{F},\mmp)$ denote the underlying probability
space. As $W_{n},\ n\in\mn_{0}$ constitutes a nonnegative mean one
martingale, we can uniquely define a new probability measure
$\wh\mmp$ on $(\Omega,\mathcal{F}_{\infty})$ via the projections
  $$ d\,\wh\mmp_{|\mathcal{F}_{n}}\ =\ W_{n}\,d\,\mmp_{|\mathcal{F}_{n}} $$
for all $n\in\mn_0$.

Fix $n$ and define a random variable $\Xi_{n}$ taking values in
$\mathbf{V}_n:=\{v\in\mathbf{V}:|v|=n\}$ such that
$$\wh\mmp(\Xi_{n}=v|\ \mathcal{F}_{\infty})\ =\ \frac{L_v}{W_n}. $$
Hence $\Xi_{n},n\in\mn$, picks a node in $\mathbf{V}_n$ in
accordance with the size-biased distribution obtained from
$L_{v},\ v\in\mathbf{V}_{n}$. Let $(\Xi_{0},...,\Xi_{n})$ denote
the vertices visited by the path connecting the root
$\Xi_{0}:=\varnothing$ with $\Xi_{n}$. It is not difficult to
verify that, conditioned upon $\mathcal{F}_{\infty}$, this random
vector constitutes a Markov chain on the subtree $\mathbf{V}_{\le
n}:=\{v\in\mathbf{V}:|v|\le n\}$ with one-step transition
probabilities
  $$ P(v,vi)\ :=\ {L_i(v)W_{k}(v)\over W_{k+1}(vi)},
\quad v\in\mathbf{V}_{k},\ vi\in\mathbf{V}_{k+1}. $$ Though
suppressed in the notation, it should be noticed that
$P(\cdot,\cdot)$ depends on $n$ and on $\mathcal{F}_{\infty}$. The
thus obtained random line of individuals $(\Xi_{0},...,\Xi_{n})$
in $\mathbf{V}_{\le n}$ is called its \emph{spine}, and the main
observation stated in Proposition \ref{spine} below is that these
individuals produce offspring and pick a position in a different
way than the other population members.

Define
$$ \mathcal{I}_{k}\ :=\ \big\{i\in\mn : \Xi_{k-1}i\ne \Xi_{k}\text{ and }L_{i}(\Xi_{k-1})>0\big\} $$
to be the random set of labels $i$ such that $\Xi_{k-1}i$ is
nonspinal offspring in generation $k$ of the spinal mother
$\Xi_{k-1}$. Notice that $\mathcal{I}_{k}$ may be empty. Define
further
$$ \mathcal{G}_{n}\ :=\ \sigma\Bigg(\Big(\Xi_{k},L_{\Xi_{k}},\mathcal{I}_{k}\Big)_{1\le k\le n},
\sum_{i\in\mathcal{I}_{k}}\delta_{L_i(\Xi_{k-1})}\Bigg), $$
$\mathbf{S}=\{(v,L_{v}):v\in\mathbf{V}\}$ and $\mathbf{S}_{\le
n}:=\{(v,L_{v}): |v|<n\}$. Following our usual convention, we let
$\mathbf{S}_{\le n}(v)$ denote the shifted counterpart of
$\mathbf{S}_{\le n}=\mathbf{S}_{\le n}(\varnothing)$ rooted at
$v$, more precisely
  $$ \mathbf{S}_{\le n}(v)\ :=\ \big\{(vw,L_w(v)): |w|<n \big\}. $$
The following proposition, of which parts (a)--(d) appear in a
similar form in \cite{HuShi}, provides all relevant information on
the distribution of $\mathbf{S}_{\le n}$ and the spine under
$\wh\mmp$.

\begin{proposition}\label{spine}
The following assertions hold true under the probability measure
$\wh\mmp$ for any fixed $n\in\mn$:
  \begin{itemize}
  \item[(a)] The random vectors $\big(\sum_{i\in\mathcal{I}_{k}}\delta_{L_i(\Xi_{k-1})},L_{\Xi_{k}}/L_{\Xi_{k-1}}\big)$,
  $1\le k\le n$,
are independent and identically distributed with the same
distribution as
$\big(\sum_{i\in\mathcal{I}_{1}}\delta_{L_i},L_{\Xi_{1}}\big)$.

\item[(b)] Conditioned upon $\mathcal{G}_{n}$, the shifted
weighted subtrees $\mathbf{S}_{\le n- |v|}(v)$,
$v\in\bigcup_{k=1}^{n}\mathcal{I}_{k}$, are independent, and
$\wh\mmp(\mathbf{S}_{\le n-|v|}(v)\in\cdot|\mathcal{G}_{n})\equiv
\mmp(\mathbf{S}_{n-|v|}\in\cdot)$.

\item[(c)] Putting $\Pi_{k}:=L_{\Xi_{k}}$ and
$Q_{k}:=\sum_{i\in\mn}L_i(\Xi_{k-1})$ for $k\in\mn_0$, the random
vectors $\big(\Pi_{k}/\Pi_{k-1},Q_{k},|\mathcal{I}_{k}|\big),\
1\le k\le n$, are independent copies of
$\big(\Pi_{1},Q_{1},|\mathcal{I}_{1}|\big)$. Moreover,
$\wh\me\log\Pi_{1}=m'(1)$ if $m'(1)$ exists, while
$\wh\me\log\Pi_{1}$ does not exist, otherwise. \item[(d)] For any
nonnegative measurable $f:[0,\infty)\to [0,\infty)$
\begin{equation}\label{01}
\wh\me f(\Pi_{n})\ =\ \me\Bigg(\sum_{|v|=n}L_{v}f(L_{v})\Bigg).
\end{equation}
\item[(e)] For any nondecreasing and concave function $f : [0,\infty)\to [0,\infty)$
\begin{equation}\label{02}
\wh\me f(W_{n})\ \le\ \wh\me f\left(\sum_{k=0}^{n-1}\Pi_{k}Q_{k+1}\right).
\end{equation}

\end{itemize}
\end{proposition}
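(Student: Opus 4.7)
The plan is to start from the basic change-of-measure identity
$$\wh\me[F\cdot\1_{\Xi_n=v}]\ =\ \me[F\cdot L_v]\qquad\text{for }v\in\mathbf{V}_n,$$
valid for every nonnegative $\mathcal{F}_n$-measurable $F$. It follows at once from $d\wh\mmp_{|\mathcal{F}_n}=W_n\,d\mmp_{|\mathcal{F}_n}$ combined with $\wh\mmp(\Xi_n=v|\mathcal{F}_\infty)=L_v/W_n$. Part (d) is read off by taking $F=f(L_v)$ and summing over $|v|=n$; specializing to $f=\log$, $n=1$ yields the identity $\wh\me\log\Pi_1=m'(1)$ in (c) via \eqref{xlogx}.

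To obtain (a), (b) and the i.i.d.\ part of (c), I would sum the identity above over all paths $v=v_1\cdots v_n\in\mathbf{V}_n$ and factor $L_v=\prod_{k=1}^n L_{v_k}(v_1\cdots v_{k-1})$. Because the offspring families $(L_i(u))_{i\in\mn}$ are i.i.d.\ across $u\in\mathbf{V}$ under $\mmp$, this factorization reveals that under $\wh\mmp$ the spinal mother at each generation picks her next spinal child by size-biasing within her own family, independently across generations; this yields the i.i.d.\ statement (c) and the matching marginal distribution in (a). For (b), the same factorization shows that the bias $L_v$ depends only on weights sitting inside spinal families, so the subtrees rooted at off-spinal vertices retain their original joint distribution; thus conditionally on $\mathcal{G}_n$ the shifted trees $\mathbf{S}_{\le n-|v|}(v)$, $v\in\bigcup_k\mathcal{I}_k$, are independent with the claimed $\mmp$-marginals.

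Part (e) is the one requiring a small idea. I would decompose $W_n$ by assigning every $v\in\mathbf{V}_n$ to the unique generation $k$ at which the path $\varnothing\to v$ first leaves the spine, with $v=\Xi_n$ as the residual case, obtaining
$$W_n\ =\ \Pi_n\ +\ \sum_{k=1}^n\Pi_{k-1}\sum_{i\in\mathcal{I}_k}L_i(\Xi_{k-1})\,Z_{n-k}(\Xi_{k-1}i).$$
Part (b) tells me that conditionally on $\mathcal{G}_n$ each $Z_{n-k}(\Xi_{k-1}i)$ has the $\mmp$-distribution of $Z_{n-k}$, hence mean $m^{n-k}(1)=1$, so
$$\wh\me[W_n\mid\mathcal{G}_n]\ =\ \Pi_n\ +\ \sum_{k=1}^n\Pi_{k-1}\sum_{i\in\mathcal{I}_k}L_i(\Xi_{k-1}).$$
Separating the spinal child out of $Q_k$ via $\Pi_{k-1}Q_k=\Pi_k+\Pi_{k-1}\sum_{i\in\mathcal{I}_k}L_i(\Xi_{k-1})$ and summing yields the bookkeeping identity
$$\sum_{k=0}^{n-1}\Pi_kQ_{k+1}\ =\ \wh\me[W_n\mid\mathcal{G}_n]\ +\ \sum_{k=1}^{n-1}\Pi_k\ \geq\ \wh\me[W_n\mid\mathcal{G}_n].$$
Conditional Jensen, applicable since $f$ is nondecreasing and concave, then gives $\wh\me[f(W_n)\mid\mathcal{G}_n]\leq f(\wh\me[W_n\mid\mathcal{G}_n])\leq f(\sum_{k=0}^{n-1}\Pi_kQ_{k+1})$, and taking $\wh\me$ delivers \eqref{02}. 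The main obstacle I foresee is pinpointing the correct decomposition of $W_n$ along the spine in (e) and verifying that the residual term $\sum_{k=1}^{n-1}\Pi_k$ has the right sign; parts (a)--(d) are routine size-biasing bookkeeping once the change-of-measure formula is in hand.
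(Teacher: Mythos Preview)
Your proposal is correct and follows the same route the paper has in mind: the paper actually omits the proof, stating only that parts (a)--(d) follow the size-biasing arguments of Lyons, Pemantle and Peres \cite{LPP} (with \eqref{01} also in \cite{BiggKypr97}) and that part (e) is the argument of Alsmeyer and Iksanov \cite{AlsIks} leading to their formula (60). Your change-of-measure identity, the spinal decomposition of $W_n$, the computation of $\wh\me[W_n\mid\mathcal{G}_n]$, and the bookkeeping $\sum_{k=0}^{n-1}\Pi_kQ_{k+1}=\wh\me[W_n\mid\mathcal{G}_n]+\sum_{k=1}^{n-1}\Pi_k$ followed by conditional Jensen are exactly the steps those references carry out, so there is nothing to add.
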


We omit the proof of this result and mention only that parts (a)--(d) follow along similar arguments as those provided for supercritical Galton-Watson trees by Lyons et al. \cite{LPP}. Equality \eqref{01} may also be found in \cite{BiggKypr97}. Part (e) has been derived by Alsmeyer and Iksanov \cite{AlsIks}, see their argument to derive formula (60).

For any $\theta\ge 0$ such that $m(\theta)<\infty$, the previously
defined size-biasing can clearly be done as well with respect to
$W_{n}^{(\theta)}, n\in\mn_{0}$ by introducing the probability
measure $\wh\mmp_{\theta}$ on $\mathcal{F}_{\infty}$ defined via
the projections
$$ d\,\wh\mmp^{(\theta)}_{\mathcal{F}_{n}}\ =\ W_{n}^{(\theta)}\,d\,\mmp_{|\mathcal{F}_{n}} $$
for $n\in\mn_0$. Notice that
  \begin{equation}\label{RadonNikodym}
  \frac{d\,\wh\mmp^{(\theta)}_{\mathcal{F}_{n}}}{d\,\wh\mmp_{|\mathcal{F}_{n}}}\ =\ \frac{\Pi_{n}^{\theta-1}}{m^n(\theta)}
  \end{equation}
for each $n\in\mn_0$, because
\begin{align*}
\wh\mmp^{(\theta)}(B)\ &=\ \me\Bigg(\sum_{|v|=n}\frac{L_{v}^{\theta}}{m^n(\theta)}\1_{B}\Bigg)\\
&=\ \me\Bigg(W_{n}\sum_{|v|=n}\frac{L_{v}}{W_{n}}\frac{L_{v}^{\theta-1}}{m^n(\theta)}\1_{B}\Bigg)\\
&=\ \wh\me\Bigg(\sum_{|v|=n}\wh\mmp(\Xi_{n}=v|\mathcal{F}_{\infty})\frac{L_{v}^{\theta-1}}{m^n(\theta)}\1_{B}\Bigg)\\
&=\ m^{-n}(\theta)\,\wh\me\Big(\Pi_{n}^{\theta-1}\1_{B}\Big)
\end{align*}
for all $B\in\mathcal{F}_{n}$.

\section{Auxiliary results}

The next result will be crucial for our further analysis as explained in the subsequent Remark \ref{rem2.3}.

\begin{lemma}\label{asa}
For any fixed $a>0$, the series $A$ in (\ref{series}) converges a.s. (in $L_{p}$ for $p>1$) if, and only if, the same holds true for the series
\begin{equation}\label{ser2}
A'\ :=\ \sum_{n\ge 0} b_n(W_{n+1}-W_n),
\end{equation}
where $b_n:=\sum_{k=0}^n e^{ak}=(e^a-1)^{-1}(e^{a(n+1)}-1)$ for
$n\in\mn_0$. In this case $A'=A$ a.s.
\end{lemma}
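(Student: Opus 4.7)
My plan is to reduce both the a.s.\ and $L_p$ equivalences to the single summation-by-parts identity
\[
S_N - S'_N \;=\; b_N(W - W_{N+1}),
\]
where $S_N := \sum_{n=0}^N e^{an}(W-W_n)$ and $S'_N := \sum_{n=0}^N b_n(W_{n+1}-W_n)$; this identity follows by Abel summation, using $W_0 = 1$ and $b_n - b_{n-1} = e^{an}$. Once it is in hand, each equivalence reduces to showing that $b_N(W-W_{N+1}) \to 0$ (in the relevant mode) follows from convergence of either series, and the equality $A = A'$ a.s.\ is automatic because both sides of the identity then converge to the same limit.

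The direction ``$A \Rightarrow A'$'' is easy in either mode: convergence of $A$ forces its summands $e^{an}(W-W_n) \to 0$, and shifting the index gives $e^{an}(W-W_{n+1}) = e^{-a}\,e^{a(n+1)}(W-W_{n+1}) \to 0$; since $b_N/e^{aN}$ is bounded, also $b_N(W-W_{N+1}) \to 0$.

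The reverse direction ``$A' \Rightarrow A$'' is where the real work lies. For fixed $N$, I decompose
\[
b_N(W_{M+1} - W_{N+1}) \;=\; \sum_{k=N+1}^M \frac{b_N}{b_k}\;b_k(W_{k+1}-W_k),
\]
and apply Abel summation to the strictly decreasing positive weights $b_N/b_k$ with partial sums $V_k := S'_k - S'_N$ (so $V_N = 0$). Passing to $M \to \infty$, using $V_M \to A' - S'_N$ and $b_N/b_M \to 0$, one arrives at
\[
|b_N(W-W_{N+1})| \;\le\; (b_N/b_{N+1})\sup_{k \ge N+1}|V_k| \;\le\; \sup_{k \ge N+1}|S'_k - S'_N|.
\]
In the a.s.\ case, the right-hand side tends to zero by the Cauchy criterion for $(S'_n)$. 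In the $L_p$ case, $(V_k)_{k \ge N}$ is a mean-zero martingale relative to $(\mathcal{F}_{k+1})_{k \ge N}$, so Doob's maximal inequality combined with the $L_p$-convergence of $S'_k$ to $A'$ yields
\[
\me\sup_{k \ge N+1}|V_k|^p \;\le\; (p/(p-1))^p\,\me|A'-S'_N|^p \;\longrightarrow\; 0.
\]
The only real obstacle is performing the Abel manipulation cleanly and justifying the passage to the limit $M \to \infty$; everything else is routine.
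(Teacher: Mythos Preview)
Your proof is correct and rests on the same Abel summation identity as the paper's proof (the paper writes it as $A_m = b_m(W-W_m) + A'_{m-1}$, which is your identity shifted by one index). The forward direction $A\Rightarrow A'$ is handled identically.

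Where you diverge is in the reverse direction $A'\Rightarrow A$. For almost sure convergence, the paper simply invokes a tail-sum analogue of Kronecker's lemma from Asmussen--Hering, whereas you supply the argument yourself via the Abel-weighted bound $|b_N(W-W_{N+1})|\le \sup_{k\ge N+1}|S'_k-S'_N|$; your inline argument is essentially a proof of the cited lemma, so the two are equivalent but yours is self-contained. For $L_p$, the paper uses Burkholder's inequality to show $b_m^p\,\me|W-W_m|^p\to 0$, while you combine the same pointwise Abel bound with Doob's maximal inequality applied to the tail martingale $(V_k)_{k\ge N}$. Your route is slightly more economical here, since Doob suffices and Burkholder is not needed for this lemma (though the paper uses Burkholder heavily elsewhere anyway). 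One small point worth making explicit: in the $L_p$ case you tacitly use that $L_p$-convergence of the martingale $S'_N$ entails a.s.\ convergence, which is what justifies passing to the limit $M\to\infty$ in the pointwise bound before taking $p$-th moments.
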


\begin{proof}
Define $A_{m}:=\sum_{n=0}^m e^{an}(W-W_{n})$ for $m\in\mn_0$. Then
\begin{align}\label{2}
%\begin{split}
A_{m}\ &=\ \lim_{l\to\infty}\sum_{n=0}^m e^{an}
\sum_{k=n}^{l} (W_{k+1}-W_k)\notag\\
&=\ \lim_{l\to\infty}\sum_{k=0}^{l}(W_{k+1}-W_k)\sum_{n=0}^{k\wedge m}e^{an}\notag\\ \noalign{\break}
&=\ \sum_{k=0}^{\infty}b_{k\wedge m}(W_{k+1}-W_{k})\notag\\
&=\ b_{m}(W-W_{m})\ +\ A_{m-1}'\quad\text{a.s.}
%\end{split}
\end{align}
where $A_{m-1}':=\sum_{k=0}^{m-1}b_{k}(W_{k+1}-W_{k})$. Now, if $A$ in \eqref{series} converges a.s., then $\lim_{m\to\infty}b_{m}(W-W_{m})=0$ a.s.\ and thus, by letting $m$ tend to infinity in \eqref{2}, we see that
$A'$ converges a.s.\ and equals $A$. Conversely, given the almost sure~convergence of $A'$, a tail sum analogue of Kronecker's lemma (see \cite[Lemma 4.2]{Asm}) ensures that $\lim_{n\to\infty} e^{an}(W-W_n)=0$ a.s. This in turn allows us to read \eqref{2} backwards thus concluding the a.s.\ convergence of $A$ as well as $A=A'$ a.s.

If $A$ is $L_{p}$-convergent for some $p>1$, then $||A_m-A||_p\to
0$ and therefore $e^{am}||W-W_m||_p=||A_{m+1}-A_m||_p \to 0$ as
$m\to\infty$. Now use \eqref{2} to infer with the help of
Minkowski's inequality
$$ \|A_{m-1}^\prime\|_{p}\ \le\ b_m\|W-W_{m}\|_{p}\ +\ \|A_{m}\|_{p} $$
and thereupon the $L_{p}$-boundedness of the martingale $A_{n}',
n\in\mn_{0}$. Consequently (see, for example, \cite[Proposition
IV-2-7]{Neveu} and its proof), $A'$ defined in \eqref{ser2}
converges a.s.\ as well as in $L_{p}$. Conversely, if $A'$ is
$L_{p}$-convergent, then by an appeal to Burkholder's inequality
(see Lemma \ref{bu} below)
\begin{align*}
b_{m}^{p}\,\me|W-W_{m}|^{p}\ &\le\ Cb_{m}^{p}\,\me\left(\sum_{n\ge m}(W_{n+1}-W_{n})^{2}\right)^{p/2}\\
&\le\ C\,\me\left(\sum_{n\ge m}b_{n}^{2}(W_{n+1}-W_{n})^{2}\right)^{p/2}\\
&\le\ C\,\me|A'-A_{m-1}'|^{p}\ \to\ 0\quad\text{as } m\to\infty,
\end{align*}
where $C\in (0,\infty)$ is a generic constant that may differ from line to line. With this result we infer from \eqref{2}
\begin{align*}
\|A_{m+n}&-A_{m}\|_{p}\\
&\le\ b_{m}\|W-W_{m}\|_{p}+b_{m+n}\|W-W_{m+n}\|_{p}
+\|A_{m+n-1}'-A_{m-1}'\|_{p}\\
&\le\ 2\sup_{k\ge m}b_{k}\|W-W_{k}\|_{p}
+\|A_{m+n-1}'-A_{m-1}'\|_{p}\\
&\to\ 0\quad\text{as } m,n\to\infty
\end{align*}
and thus the asserted $L_{p}$-convergence of $A$.
\end{proof}

\begin{rem}\label{rem2.3}\rm
(a) As, for each $n\in\mn_0$,
\begin{equation*}
A_{n}'\ =\ \frac{e^{a}}{e^{a}-1}\sum_{k=0}^{n}e^{ak}(W_{k+1}-W_{k})\ -\
\frac{1}{e^{a}-1}(W_{n+1}-1)
\end{equation*}
the proof of Lemma \ref{asa} may easily be extended to show further that $A$ converges a.s.\ (or in $L_{p}$ for $p>1$) if, and only if, this holds true for
\begin{equation}\label{ser3}
\wh{A}:=\sum_{n\ge 0}e^{an}(W_{n+1}-W_{n}).
\end{equation}
In this case, $\wh A$ is readily seen to satisfy\begin{equation}\label{fix2}
\wh{A}\ \od\ e^{a}\sum_{|v|=1}L_{v}\wh{A}_{v}+W_{1}-1
\end{equation}
with $\wh{A}_{v}$ being independent copies of $\wh{A}$ which are also independent of $W_{1}$. Hence, unlike \eqref{fix} for $A$, \eqref{fix2} constitutes a proper stochastic fixed point equation.

\vspace{.2cm}\noindent
(b) The motivation behind dealing with $\wh A$ in (\ref{ser3}) hereafter rather than $A$ in (\ref{series}) stems from the fact that the partial sums $\wh A_{n}:=\sum_{k=0}^{n}e^{ak}(W_{k+1}-W_{k})$, $n\in\mn_{0}$, constitute a martingale whereas those associated with $A$ do not. This entails that $\wh A$ forms a martingale limit (like $A'$) and as such is easier to deal with. Indeed, as far as the $L_p$-convergence ($p>1$) is concerned, a well-known property of martingales (already used in the previous proof) tells us that it suffices to prove $\me |\wh A|^p<\infty$ or, equivalently, $L_{p}$-boundedness of the $\wh A_{n}$ (see \cite[Proposition IV-2-7]{Neveu}).
\end{rem}

The proof of Theorem \ref{main1} hinges to a large extent on Proposition
\ref{limitchar} on the functions $s_{n}(r)$ defined below. The connection is provided by an application of Burkholder's inequality
which in turn is stated for reference as Lemma \ref{bu} at the end of this section.

\begin{lemma}\label{squarefunction}
Let $1<p<2$ and $W_{n}, n\in\mn_{0}$ be uniformly integrable with
$\me W_{1}^{p}<\infty$. Then the function
$$ [1,2]\ \ni\ r\ \mapsto\ s_{n}(r)\ :=\ \me\big(Z_{n}^{(r)}\big)^{p/r}\ =\ \me\Bigg(\sum_{|v|=n}L_{v}^{r}\Bigg)^{p/r} $$
is decreasing and bounded by $\sup_{n\ge 0}\me W_{n}^{p}$ for each
$n\in\mn$. Furthermore,
\begin{equation}\label{supmulti}
s_{n}(r)\
\begin{cases}
\le\ s_{k}(r)s_{n-k}(r),&\quad\text{if }r\in [1,p],\\
\ge\ s_{k}(r)s_{n-k}(r),&\quad\text{if }r\in [p,2]
\end{cases}
\end{equation}
for $0\le k\le n$, and
\begin{equation}\label{limit}
\lim_{n\to\infty}s_{n}^{1/n}(r)\
\begin{cases}
=\ \inf_{j\ge 1}s_{j}^{1/j}(r),&\quad\text{if }r\in [1,p],\\
=\ \sup_{j\ge 1}s_{j}^{1/j}(r),&\quad\text{if }r\in [p,2].
\end{cases}
\end{equation}
\end{lemma}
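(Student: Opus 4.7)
The plan is to prove the three assertions—monotonicity with the uniform bound, the (sub/super)\-multi\-plica\-tivity \eqref{supmulti}, and the limit identity \eqref{limit}—in order, with the branching decomposition of $Z_n^{(r)}$ combined with Jensen's inequality doing the heavy lifting. For the monotonicity and the bound, the observation is that on any nonnegative (possibly infinite) sequence $a=(a_i)$ the map $r\mapsto(\sum_i a_i^r)^{1/r}$ is nonincreasing on $[1,\infty)$—this is the standard $\ell^r\hookrightarrow\ell^s$ embedding, verified via $a_i^s\le\|a\|_{\ell^\infty}^{s-r}a_i^r$ and $\|a\|_{\ell^\infty}\le(\sum_i a_i^r)^{1/r}$. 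Applied pointwise to $(L_v)_{|v|=n}$, this makes $(Z_n^{(r)})^{p/r}$ a.s.\ nonincreasing in $r$, and taking expectation transmits the monotonicity to $s_n(r)$. Since $Z_n^{(1)}=W_n$, the bound $s_n(r)\le s_n(1)=\me W_n^p\le\sup_{n\ge 0}\me W_n^p$ is then immediate.

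For the (sub/super)multiplicativity I would exploit the branching decomposition
\[
Z_n^{(r)}\ =\ \sum_{|u|=k}L_u^{r}\,Z_{n-k}^{(r)}(u),
\]
in which the $Z_{n-k}^{(r)}(u)$, $|u|=k$, are i.i.d.\ copies of $Z_{n-k}^{(r)}$ independent of $\mathcal{F}_k$. On the event $\{Z_k^{(r)}>0\}$ the numbers $p_u:=L_u^{r}/Z_k^{(r)}$ form a probability vector, so that $Z_n^{(r)}/Z_k^{(r)}=\sum_u p_u Z_{n-k}^{(r)}(u)$ is a random convex combination of the $Z_{n-k}^{(r)}(u)$. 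Setting $q:=p/r$, Jensen's inequality applied to $x\mapsto x^q$ gives, for $r\in[1,p]$ (so $q\ge 1$ and $x^q$ is convex),
\[
(Z_n^{(r)})^{q}\ \le\ (Z_k^{(r)})^{q-1}\sum_u L_u^{r}\,(Z_{n-k}^{(r)}(u))^q,
\]
and the reverse inequality for $r\in[p,2]$ (so $q\le 1$ and $x^q$ is concave). Conditioning on $\mathcal{F}_k$ and using the independence and the common law of the $Z_{n-k}^{(r)}(u)$, the conditional expectation of the inner sum equals $s_{n-k}(r)\,Z_k^{(r)}$; a further expectation then produces $s_n(r)\le s_k(r)\,s_{n-k}(r)$ in the first case and the opposite inequality in the second. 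On the complementary event $\{Z_k^{(r)}=0\}$ every $L_v$ with $|v|=k$ vanishes and consequently so does $Z_n^{(r)}$, so both sides collapse to $0$ and there is nothing to prove.

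The limit identity \eqref{limit} is then a routine consequence of Fekete's lemma. Writing $\alpha_n:=\log s_n(r)$ (finite since supercriticality guarantees $s_n(r)>0$), Step 2 gives subadditivity of $(\alpha_n)$ for $r\in[1,p]$ and superadditivity for $r\in[p,2]$, so $\alpha_n/n\to\inf_{j\ge 1}\alpha_j/j$ in the first case and $\alpha_n/n\to\sup_{j\ge 1}\alpha_j/j$ in the second; exponentiating yields \eqref{limit}. The main obstacle is Step 2: one must recognize the convex-combination structure hidden in the branching decomposition so that Jensen applies in the correct direction, then track the sign of $q-1$ carefully—noting that the factor $(Z_k^{(r)})^{q-1}$ together with the conditional expectation restores an extra factor of $Z_k^{(r)}$ which becomes $s_k(r)$—while disposing of the degenerate event $\{Z_k^{(r)}=0\}$. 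The monotonicity and Fekete steps are bookkeeping by comparison.
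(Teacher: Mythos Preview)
Your proof is correct and follows essentially the same route as the paper: the $\ell^r$--monotonicity argument for the first assertion (the paper phrases it via strict superadditivity of $x\mapsto x^{r/q}$, which is the same pointwise inequality), the convex-combination trick with $p_u=L_u^r/Z_k^{(r)}$ combined with Jensen for $x\mapsto x^{p/r}$ and conditioning on $\mathcal{F}_k$ for \eqref{supmulti}, and Fekete's lemma for \eqref{limit}. Your explicit handling of $\{Z_k^{(r)}=0\}$ and the remark that supercriticality forces $s_n(r)>0$ are details the paper leaves implicit.
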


\begin{proof}
The first assertion follows immediately from $s_{n}(1)=\me W_{n}^{p}$ and
\begin{equation*}
\me\Bigg(\sum_{|v|=n}L_{v}^{r}\Bigg)^{p/r}\ =\ \me\Bigg(\sum_{|v|=n}L_{v}^{q\cdot (r/q)}\Bigg)^{p/r}
\ <\ \me\Bigg(\sum_{|v|=n}L_{v}^{q}\Bigg)^{p/q}
\end{equation*}
for any $1\le q<r\le 2$, where supercritical branching and strict superadditivity
of $x\mapsto x^{r/q}$ have been utilized. As for \eqref{supmulti}, we obtain in the case $r\in [p,2]$ with the help of Jensen's inequality
\begin{align*}
s_{n}(r)\ &=\ \me\Bigg(\sum_{|v|=k}L_{v}^{r}Z_{n-k}^{(r)}(v)\Bigg)^{p/r}\\
&=\ \me\left(\big(Z_{k}^{(r)}\big)^{p/r}\Bigg(\sum_{|v|=k}\frac{L_{v}^{r}}{Z_{k}^{(r)}}Z_{n-k}^{(r)}(v)\Bigg)^{p/r}\right)\\ %\noalign{\break}
&\ge\ \me\left(\big(Z_{k}^{(r)}\big)^{p/r}\sum_{|v|=k}\frac{L_{v}^{r}}{Z_{k}^{(r)}}\big(Z_{n-k}^{(r)}(v)\big)^{p/r}\right)\\
&=\ \me\left(\big(Z_{k}^{(r)}\big)^{p/r}\sum_{|v|=k}\frac{L_{v}^{r}}{Z_{k}^{(r)}}\me\Big(\big(Z_{n-k}^{(r)}(v)\big)^{p/r}\Big|\mathcal{F}_{k}\Big)\right)\\
&=\ \me\left(\big(Z_{k}^{(r)}\big)^{p/r}\sum_{|v|=k}\frac{L_{v}^{r}}{Z_{k}^{(r)}}s_{n-k}(r)\right)\\
&=\ s_{k}(r)s_{n-k}(r)
\end{align*}
for all $0\le k\le n$, and this further yields, by superadditivity
of $\log s_{n}(r)$, that $s_{n}(r)^{1/n}$ converges as
$n\to\infty$ with limit satisfying \eqref{limit}. If $r\in [1,p]$
and thus $x\mapsto x^{p/r}$ is convex, the above estimation holds
with reverse inequality sign.
\end{proof}

Notice that $\log s_{n}(r)$ is always a superadditive or
subadditive function but may be infinite. Precise information on
the asymptotic value of $s_{n}^{1/n}(r)$ as $n\to\infty$ is
provided by the next lemma. Put $g(r):=r^{-1}\log m(r)$ with
derivative
\begin{equation}\label{derivative}
g'(r)\ =\ \frac{h(r)}{r^{2}}\quad\text{with } h(r)\ :=\ \frac{rm'(r)}{m(r)}-\log m(r)
\end{equation}
on the interior of $\Bbb{D}:=\{r:m(r)<\infty\}$. Note that
$[1,p]\subset\Bbb{D}$ if $W_{n}, n\in\mn_{0}$ is uniformly
integrable and $\me W_{1}^{p}<\infty$. By supercriticality, the
function $m$ is strictly logconvex which in turn implies that $h$
is increasing with at most one zero. Therefore, the function $g$
possesses at most one minimum. Put
$$ \vth\ :=\ 2\wedge \arg\inf_{r\ge 1}\,g(r) \ \ \text{and} \ \ \gamma:=m^{1/\vth}(\vth).$$
If $\vth\in\text{int}(\Bbb{D})$ and thus $m$ is differentiable at
$\vth$, then $g'(\vth)=0$ may be rewritten as
\begin{equation}\label{minimum}
\frac{m'(\vth)}{m(\vth)}\ =\ \frac{1}{\vth}\log m(\vth).
\end{equation}
Let us also point out that $m(r)<1$ and $m'(r)<0$ for all $r\in
(1,\vth)$ because $g(r)$ has negative (right) derivative $m'(1)$
at 1 as a consequence of uniform integrability of $W_{n},
n\in\mn_{0}$.

\begin{proposition}\label{limitchar}
Suppose the assumptions of Lemma \ref{squarefunction} be true and furthermore $m(p)<1$. Let $\vth,\gamma$ be as defined above. Then, if $p\le\vth$,
\begin{align*}
\lim_{n\to\infty}s_{n}^{1/n}(r)\ =\
\begin{cases}
m^{p/r}(r),\text{ if }r\in [1,\vth)\\
\gamma^{p},\text{ if }r\in [\vth,2],
\end{cases}
\end{align*}
while, if $p>\vth$,
\begin{align*}
\lim_{n\to\infty}s_{n}^{1/n}(r)\ =\
\begin{cases}
m^{p/r}(r),\text{ if }r\in [1,q)\\
m(p),\text{ if }r\in [q,2],
\end{cases}
\end{align*}
where $q$ is the unique value in $(1,\vth)$ such that $g(q)=g(p)$, i.e.\ $m^{1/q}(q)=m^{1/p}(p)$.
\end{proposition}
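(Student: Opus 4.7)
The proof splits into the \emph{standard regime} (where $\lim_n s_n^{1/n}(r) = m^{p/r}(r)$) and an \emph{extreme regime} (where the limit saturates at $\gamma^p$ or $m(p)$), with the transition occurring at $\vartheta$ in Case $p \le \vartheta$ and at $q$ in Case $p > \vartheta$. The starting point is the trivial identity $s_n(p) = \me Z_n^{(p)} = m^n(p)$. The two main tools are Jensen's inequality (giving one-sided bounds via the convexity/concavity of $x \mapsto x^{p/r}$) and the application of Proposition \ref{lp} to the tilted branching random walk with weights $L_i^r(v)/m(r)$; its intrinsic martingale equals $W_n^{(r)}$ and its associated ``$m$-function'' is $\widetilde{m}(s) := m(rs)/m^s(r)$. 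The critical $L_{p/r}$-boundedness condition $\widetilde m(p/r) < 1$ translates precisely to $g(p) < g(r)$, which characterizes the standard regime.

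In the standard regime $r\in[1,\vartheta)$ (Case $p\le\vartheta$) or $r\in[1,q)$ (Case $p>\vartheta$), I would split further. For $r\le p$, Jensen gives $s_n(r)\ge m^{np/r}(r)$, while Proposition \ref{lp} yields the matching upper bound $s_n(r)\le C\,m^{np/r}(r)$; for $p<r<\vartheta$ (nonempty only when $p<\vartheta$), Jensen (concavity) gives $s_n(r)\le m^{np/r}(r)$, and Fatou combined with the UI of $W_n^{(r)}$ — available since $h(r)<0$ for $r<\vartheta$ — provides the matching lower bound via $\me(W^{(r)})^{p/r}>0$. For the extreme regime and its boundary I would crucially use the elementary two-sided bounds
\begin{equation*}
(Z_n^{(r)})^{p/r}\ \ge\ Z_n^{(p)}\quad\text{if }r\le p,\qquad (Z_n^{(r)})^{p/r}\ \le\ Z_n^{(p)}\quad\text{if }r\ge p,
\end{equation*}
which come from super-/sub-additivity of $x\mapsto x^{p/r}$ and immediately give $s_n(r)\ge m^n(p)$ for $r\le p$ and $s_n(r)\le m^n(p)$ for $r\ge p$. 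Combined with the monotonicity of $r\mapsto s_n(r)$ (Lemma \ref{squarefunction}) and the continuity of $g$, the critical value is reached by letting $r'\uparrow q$ (resp.\ $r'\uparrow\vartheta$) in the standard regime, yielding $\limsup_n s_n^{1/n}(r)\le m(p)$ (resp.\ $\gamma^p$). For Case $p\le\vartheta$ and $r\in[\vartheta,2]$, the elementary $\ell_r\le\ell_\vartheta$ inequality gives $s_n(r)\le s_n(\vartheta)\le\gamma^{np}$ (Jensen at $\vartheta$), while $s_n(r)\ge\me M_n^p$ combined with the classical result $n^{-1}\log M_n\to\log\gamma$ in probability (where $M_n:=\max_{|v|=n}L_v$) closes the lower bound.

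The main obstacle I expect is the lower bound $\liminf_n s_n^{1/n}(r)\ge m(p)$ for $r\in(p,2]$ in Case $p>\vartheta$: the typical size of $(Z_n^{(r)})^{p/r}$ is only $\gamma^{np}$, strictly smaller than $m^n(p)$, so neither Jensen nor the $\me M_n^p$ estimate is tight, and the correct scaling must come from rare events producing many moderately heavy particles. I would attempt this via a refined spine argument under $\wh\mmp^{(p)}$: writing $s_n(r) = m^n(p)\,\wh\me^{(p)}[(Z_n^{(r)})^{p/r}/Z_n^{(p)}]$ and exploiting $(Z_n^{(r)})^{p/r}\ge\Pi_n^p$ along the spine, the task reduces to showing that $\wh\me^{(p)}[\Pi_n^p/Z_n^{(p)}]$ does not decay exponentially, which one can attempt to secure via supermultiplicativity together with the spine decomposition of Proposition \ref{spine}(a)--(d). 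Alternatively, one can try to establish directly that $\me(W_n^{(r)})^{p/r}\sim\widetilde m(p/r)^n=(m(p)/m^{p/r}(r))^n$ in this supercritical tilted regime, which would immediately deliver $\lim_n s_n^{1/n}(r)=m(p)$.
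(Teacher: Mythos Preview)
Your overall architecture matches the paper's proof almost exactly: the same case split, the same use of Jensen/Proposition~\ref{lp} for the standard regime, the same monotonicity-plus-continuity argument for the upper bound in the extreme regime, and the same $M_n$-based lower bound for Case~A. (One minor omission: when $\vth=2$ the Biggins result on $n^{-1}\log M_n$ is not directly applicable, and the paper handles this separately via the $L_1$-convergence of $W_n^{(2)}$ combined with a truncation; your sketch glosses over this.)

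The genuine gap is in the lower bound for $r\in(p,2]$ when $p>\vth$, which you rightly flag as the main obstacle. Your change of measure to $\wh\mmp^{(p)}$ and the identity $s_n(r)=m^n(p)\,\wh\me^{(p)}\!\big[(Z_n^{(r)})^{p/r}/Z_n^{(p)}\big]$ are correct starting points, but the proposed route via $(Z_n^{(r)})^{p/r}\ge\Pi_n^p$ and ``supermultiplicativity'' does not obviously close: the ratio $\Pi_n^p/Z_n^{(p)}$ is not supermultiplicative in any evident sense, and under $\wh\mmp^{(p)}$ the denominator $Z_n^{(p)}$ is size-biased and can be large. The paper instead works under $\wh\mmp$ (not $\wh\mmp^{(p)}$) with the identity $s_n(2)=\wh\me\big[\Pi_n(Z_n^{(2)})^{-\beta}\big]$, $\beta=1-p/2$, and proceeds in three concrete steps you do not mention: (i) after a truncation ensuring bounded $W_1$ and offspring number, the spine decomposition gives a \emph{pointwise upper bound} $\wh\me\big[(Z_n^{(2)})^{p/2}\mid\mathcal G_n\big]\le C\sum_{k=0}^n\Pi_k^p\,s_{n-k}(2)$; (ii) Jensen for the convex map $y\mapsto y^{(p-2)/p}$ converts this into a \emph{lower} bound on the conditional expectation of $(Z_n^{(2)})^{-\beta}$; (iii) passing to $\wh\mmp^{(p)}$ and using the already-known upper bound $s_k(2)\le m^k(p)$, the problem reduces to $\wh\me^{(p)}\big[(\sum_{k\ge 0}(\Pi_k^*)^{-p})^{-(2-p)/p}\big]>0$, which follows because $\wh\me^{(p)}\log\Pi_1^*=h(p)/p>0$ when $p>\vth$. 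It is this last positive-drift observation, feeding back through the conditional Jensen step, that delivers the sharp $s_n(2)\ge\nu\,m^n(p)$; your sketch does not supply an analogue.
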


Notice that in both cases above the obtained limit function $s_{\infty}(r)$, say, is continuous at its "critical" value $\vth$, respectively $q$. Also, this limit function for $p>\vth$ converges to the one for $p=\vth$, for then $q$ equals $\vth$ as well.

\begin{proof} {\sc Case A}. $p\le\vth$ and $r\in [\vth,2]$. \emph{Lower estimate}

\vspace{.2cm}\noindent
Since $s_{n}(r)$ is decreasing in $r$ it suffices to show
\begin{equation}\label{23}
\liminf_{n\to\infty}s^{1/n}_{n}(2)\ \ge\ \gamma^{p}.
\end{equation}

\vspace{.2cm}\noindent
{\sc Subcase A.1}. $\gamma=m^{1/\vth}(\vth)$ for $\vth\in (1,2)$.

\vspace{.1cm}\noindent
An old result by Biggins \cite{Big0},\cite{Big2} tells us that
$$ \frac{\log M_{n}}{n}\ \to\ \frac{1}{\vth}\log m(\vth)\quad\text{a.s. on }\{W>0\}, $$
where $M_{n}:=\max_{|v|=n}L_{v}$. By using this fact in combination with the obvious inequality
$$ \big(Z_{n}^{(2)}\big)^{p/2}\ \ge\ M_{n}^{p}\quad\text{on }\{W>0\}, $$
we infer with the help of Jensen's inequality and Fatou's lemma
\begin{align*}
\liminf_{n\to\infty}s_{n}^{1/n}(2)\ &\ge\ \liminf_{n\to\infty}\me^{1/n}\big(M_{n}^{p}\1_{\{W>0\}}\big)\\
&=\ \liminf_{n\to\infty}\mmp^{1/n}\{W>0\}\me^{1/n}\big(M^p_n\,\big|W>0\big)\\
&\ge\ \liminf_{n\to\infty}\me\left(M_{n}^{p/n}|W>0\right)\\
&=\ m^{p/\vth}(\vth)\ =\ \gamma^{p}.
\end{align*}

\vspace{.2cm}\noindent {\sc Subcase A.2}. $\gamma=m^{1/2}(2)$
(thus $\vth=2$) and $W_{1}$ is a.s.\ bounded.

\vspace{.1cm}\noindent Then $m(2)<1$ and $m'(2)<0$ as pointed out
after \eqref{minimum}. Moreover, the almost sure boundedness
$W_{1}$ trivially ensures the same for $W_{1}^{(2)}$, in
particular $\me W_{1}^{(2)}\log^{+}W_{1}^{(2)}<\infty$. Therefore
the mean one martingale $W_{n}^{(2)}, n\in\mn_{0}$ is uniformly
integrable (cf.\ e.g.\ \cite[Theorem 1.3]{AlsIks}) and hence
convergent a.s.\ and in $L_{1}$ to a random variable $W^{(2)}$.
Since $p/2<1$, it follows that
$\me\big(W_{n}^{(2)}\big)^{p/2}\to\me\big(W^{(2)}\big)^{p/2}$ and
therefore
\begin{equation*}
s_{n}^{1/n}(2)\ =\ m^{p/2}(2)\,\me^{1/n}\big(W_{n}^{(2)}\big)^{p/2}\ \to\ m^{p/2}(2)\ =\ \gamma^{p},
\end{equation*}
as $n\to\infty$. Notice that we have indeed verified the stronger assertion that
\begin{equation}\label{30}
\lim_{n\to\infty}\frac{s_{n}(2)}{m^{pn/2}(2)}\ =\ \me\big(W^{(2)}\big)^{p/2}.
\end{equation}

\vspace{.2cm}\noindent
{\sc Subcase A.3}. $\gamma=m^{1/2}(2)$, general situation.

\vspace{.1cm}\noindent Here we use a truncation argument. For a
constant $K>0$ consider the WBP $(\mathbf{V},(\overline{L}_{v}(w),
v,w\in\mathbf{V}))$ with
\begin{equation}\label{truncate}
\overline{L}_{i}\ :=\ L_{i}\1_{\{L_{i}\ge 1/K\,,\,W_{1}(v)\le
K\}},\quad i\in\mn, v\in\mathbf{V}.
\end{equation}
This provides us with a thinning of the original WBP such that $\overline{m}(\theta):=\me(\sum_{i\ge 1}\overline{L}_{i}^{\theta})$ satisfies
$$ \overline{m}(\theta)<\infty\quad\text{and}\quad \overline{m}(\theta)\le m(\theta) $$
for all $\theta>0$. Moreover, in the obvious notation,
$$ \overline{s}_{n}(\theta)\ \le\ s_{n}(\theta) $$
for all $\theta\in [1,2]$. Plainly, as $K\to\infty$, $\overline{m}$ converges to $m$ uniformly on compact subsets contained in the interior of $\Bbb{D}$. Hence, by choosing $K$ large enough, we have for the obviously defined $\overline{\gamma}$ that
$$ \overline{\gamma}\ \ge\ (1-\eps)\gamma $$
for any fixed $\eps\in (0,1)$. By applying the result obtained
under Subcase A.2 to the normalized WBP
$(\mathbf{V},(\overline{L}_{v}(w)/\overline{m}^{|v|}(1), v,w
\in\mathbf{V}))$ we now arrive at the desired conclusion here as
well.

\vspace{.2cm}\noindent
{\sc Case A}. $p\le\vth$ and $r\in [\vth,2]$. \emph{Upper estimate}

\vspace{.1cm}\noindent
The next step is to verify
\begin{equation}\label{22}
\limsup_{n\to\infty}s_{n}^{1/n}(r)\ \le\ \gamma^{p}
\end{equation}
for each $r\in [\vth,2]$ which, in combination with $s_{n}(2)\le
s_{n}(r)$ and \eqref{23}, clearly gives the assertion of the lemma
for $r\in [\vth,2]$ and $p\le\vth$.

\vspace{0.2cm}\noindent Suppose first that $p<\vth$. Fix any
$\eps>0$ and $\theta\in (p,\vth)$ such that
$m^{1/\theta}(\theta)\le (1+\eps)\gamma<1$. Then, by another use
of Jensen's inequality,
\begin{align*}
\limsup_{n\to\infty}s_{n}^{1/n}(r)\ &\le\ \limsup_{n\to\infty}s_{n}^{1/n}(\theta)\\
&=\ \limsup_{n\to\infty}\left(m^{pn/\theta}(\theta)\,\me\big(W_{n}^{(\theta)}\big)^{p/\theta}\right)^{1/n}\\ %\noalign{\break}
&\le\ m^{p/\theta}(\theta)\,\limsup_{n\to\infty}\me^{p/n\theta}W_{n}^{(\theta)}\\
&=\ m^{p/\theta}(\theta)\\
&\le\ (1+\eps)^{p}\gamma^{p}
\end{align*}
which shows \eqref{22} as $\eps>0$ was picked arbitrarily. Now, if $p=\vth$,
we arrive at the same conclusion by choosing $\theta=p$ and $\eps=0$ in the above estimation.

\vspace{.3cm}\noindent
{\sc Case B}. $p\le\vth$ and $r\in [1,\vth)$. \emph{Lower estimate}

\vspace{.1cm}\noindent
Here we must verify
\begin{equation}\label{24}
\liminf_{n\to\infty}s_{n}^{1/n}(r)\ \ge\ m^{p/r}(r).
\end{equation}
In view of the truncation \eqref{truncate} described under Subcase A.3 it is no loss of generality to assume directly that $W_{1}$ (and thus $W_{1}^{(r)}$ as well) is a.s.\ bounded and $m(\theta)<\infty$ for all $\theta>0$. Write
\begin{equation}\label{25}
s_{n}(r)\ =\ m^{pn/r}(r)\,\me\big(W_{n}^{(r)}\big)^{p/r}
\end{equation}
for $n\in\mn_0$ and consider the WBP
$(\mathbf{V},\mathbf{L}^{r})$. Since $\theta\mapsto
m^{-\theta}(r)\,\me(\sum_{|v|=1}L_{v}^{r\theta})=m(r\theta)/m^{\theta}(r)$
has derivative
$$ \frac{rm'(r\theta)}{m^{\theta}(r)}-\log m(r)\frac{m(r\theta)}{m^{\theta}(r)} $$
taking value $r\big(\frac{m'(r)}{m(r)}-\log
m^{1/r}(r)\big)=r^{2}g'(r)<0$ at $\theta=1$, we infer (see
\cite[Theorem 1.3]{AlsIks}) that $W_{n}^{(r)}$ converges a.s.\ and
in $L_{1}$ to the random variable $W^{(r)}$ which in turn entails
\eqref{24} because, by \eqref{25} and an appeal to Jensen's
inequality and Fatou's lemma,
\begin{align*}
\liminf_{n\to\infty}s_{n}^{1/n}(r)\ &=\
m^{p/r}(r)\liminf_{n\to\infty}\me^{1/n}\big(W_{n}^{(r)}\big)^{p/r}\\
&=\ m^{p/r}(r)\liminf_{n\to\infty}\mmp^{1/n}(W^{(r)}>0)\,\me^{1/n}\Big(\big(W_{n}^{(r)}\big)^{p/r}\Big|W^{(r)}>0\Big)\\
&\ge\ m^{p/r}(r)\,\me\left(\liminf_{n\to\infty}\big(W_{n}^{(r)}\big)^{p/rn}\Big|W^{(r)}>0\right)\\
&=\ m^{p/r}(r).
\end{align*}

\vspace{.3cm}\noindent
{\sc Case B}. $p\le\vth$ and $r\in [1,\vth)$. \emph{Upper estimate}

\vspace{.1cm}\noindent
The converse
\begin{equation}\label{26}
\limsup_{n\to\infty}s_{n}^{1/n}(r)\ \le\ m^{p/r}(r)
\end{equation}
follows quite easily from \eqref{25}, for $\me\big(W_{n}^{(r)}\big)^{p/r}\le\me^{p/r}W_{n}^{(r)}=1$ for each $n\in\mn_0$ in the case $r\in [p,\vth]$ by Jensen's inequality, while in the case $r\in [1,p)$ we have $\sup_{n\ge 0}\me
\big(W_{n}^{(r)}\big)^{p/r}<\infty$ as a consequence of $\me\big(W_{1}^{(r)}\big)^{p/r}\le m^{-p/r}(r)\,\me W_{1}^{p}<\infty$ and
$$ E\Bigg(\sum_{|v|=1}\bigg(\frac{L_v}{m(r)}W_1^{(r)}\bigg)^{p/r}\Bigg)\ =\ \frac{m(p)}{m^{p/r}(r)}\ =\ e^{p(g(p)-g(r))}\ <\ 1 $$
(apply Proposition \ref{lp} to $W_{n}^{(r)}, n\in\mn_{0}$).

\vspace{.3cm}\noindent
{\sc Case C}. $p>\vth$ and $r\in[1,q).$  \emph{Upper estimate}.

\vspace{.1cm}\noindent
Notice that $m(\vth)<\infty$. As $g(\vth)<g(p)<0=g(1)$, there exists a unique $1<q<\vth$ such that $g(q)=g(p)$, i.e.\ $m(q)^{1/q}=m(p)^{1/p}$. Then, for $r\in [1,q)$, the previously given arguments are easily seen
to carry over to the present situation thus showing \eqref{26}.

\vspace{.2cm}\noindent
{\sc Case C}. $p>\vth$ and $r\in[1,q)$. \emph{Lower estimate}.

\vspace{.1cm}\noindent
By Jensen's inequality,
\begin{equation*}
s_{n}(r)\ \ge\ \me^{p/q}\big(Z_{n}^{(r)}\big)^{q/r}
\end{equation*}
But $\me\big(Z_{n}^{(r)}\big)^{q/r}$, call it $\widetilde{s}_{n}(r)$, is just the counterpart of $s_{n}(r)$ for $q<\vth$ instead of $p$. Therefore $\widetilde{s}_{n}^{\,1/n}(r)\to m^{q/r}(r)$ by what has been shown under Case B.
It thus follows that $s_{n}^{1/n}(r)$ has also the required lower bound
which completes the proof of
$$ \lim_{n\to\infty}s_{n}^{1/n}(r)\ =\ m^{p/r}(r) $$
for all $r\in [1,q)$.

\vspace{.3cm}\noindent
{\sc Case D}. $p>\vth$ and $r\in[q,2]$. \emph{Upper estimate}.

\vspace{.1cm}\noindent Since
$s_{n}(q)=\inf_{\theta<q}s_{n}(\theta)$, we obtain as a
consequence of Case B that
$$ \limsup_{n\to\infty}s_{n}^{1/n}(r)\ \le\ \limsup_{n\to\infty}s_{n}^{1/n}(q)\ \le\
 \inf_{\theta\in [1,q)}m^{p/\theta}(\theta)\ =\ m^{p/q}(q)\ =\ m(p). $$

 \vspace{.2cm}\noindent
{\sc Case D}. $p>\vth$ and $r\in[q,2]$. \emph{Lower estimate}.

\vspace{.1cm}\noindent
The proof for Case C will now be completed by showing that
\begin{equation}\label{27}
\liminf_{n\to\infty}s_{n}^{1/n}(2)\ \ge\ m(p)
 \end{equation}
(since $s_n(r)$ is decreasing in $r$) which is the most delicate part of the whole proof. Once again, possibly after a suitable truncation as described in \eqref{truncate}, it is no loss of generality to assume that $W_{1}\le K$ for some $K\ge 1$, $J\le N$ for some $N\in\mn$ and $m(\theta)<\infty$ for all  $\theta>0$. Notice also that, by subadditivity of $x\mapsto x^{p/2}$, we find
\begin{equation}\label{28}
s_{n}(2)\ \le\ m^n(p)
\end{equation}
for all $n\in\mn_0$.

\vspace{.2cm}\noindent
Put $\beta:=1-(p/2)\in (0,1)$. Recall the notation introduced in Section 2 in connection with the size-biased probability measure $\wh\mmp$. We have
  \begin{align*}
  (Z_{n}^{(2)})^{p/2}\ &=\   \Bigg(\Pi_{n}^{2}+\sum_{k=1}^{n}
    \sum_{i\in\mathcal{I}_{k}}L_v^2Z_{n-k}^{(2)}(\Xi_{k-1}i)\Bigg)^{p/2}\\
  &\le\ K^{p}\Bigg(\Pi_{n}^{p}+\sum_{k=1}^{n}\Pi_{k-1}^{p}
  (\Lambda_{n,k}^{(2)})^{p/2}\Bigg)\quad\wh\mmp\text{-a.s.},
  \end{align*}
where, for $1\le k\le n$,
  $$ \Lambda_{n,k}^{(2)}\ :=\ \sum_{i\in\mathcal{I}_{k}}Z_{n-k}^{(2)}(\Xi_{k-1}i). $$
Use Proposition \ref{spine}(b), to see that conditioned upon
$\mathcal{G}_{n}$, the $Z_{n-k}^{(2)}(\Xi_{k-1}i),
i\in\mathcal{I}_{k}$, are i.i.d.\ under $\wh\mmp$ with the same
distribution as $Z_{n-k}^{(2)}$ under $\mmp$. By combining this
with another subadditivity argument we obtain $\wh\mmp$-a.s.
\begin{equation*}
\wh\me\big((\Lambda_{n,k}^{(2)})^{p/2}|\mathcal{G}_{n}\big)\ \le\
\wh\me\left(\sum_{i\in\mathcal{I}_{1}}(Z_{n-k}^{(2)}(\Xi_{k-1}i))^{p/2}\Bigg|\mathcal{G}_{n}\right)  \
\le\ |\mathcal{I}_{1}|\,\me (Z_{n-k}^{(2)})^{p/2}
\end{equation*}
for $k=1,\ldots,n$. As $|\mathcal{I}_{1}|\le N$ for some $N\in\mn$
by truncation we arrive at
\begin{align*}
\wh\me\big((Z_{n}^{(2)})^{p/2}|\mathcal{G}_{n}\big)\ &\le\ K^{p}N\Bigg(\Pi_{n}^{p}+\sum_{k=1}^{n}\Pi_{k-1}^{p}\,\me (Z_{n-k}^{(2)})^{p/2}\Bigg)\\
&=\ K^{p}N\sum_{k=0}^{n}\Pi_{k}^{p}s_{n-k}(2)\quad\wh\mmp\text{-a.s.}
\end{align*}
Now use $\me (Z_{n}^{(2)})^{p/2}=\wh\me\big(\Pi_{n}(Z_{n}^{(2)})^{-\beta}\big)$, which follows from
\begin{align*}
\me (Z_{n}^{(2)})^{p/2}\ &=\   \wh\me\Bigg(\sum_{|v|=n}
\frac{L_{v}^{2}}{W_{n}}(Z_{n}^{(2)})^{-\beta}\Bigg)\\
&=\ \wh\me\Bigg(\sum_{|v|=n}\wh\mmp(\Xi_{n}=v|\mathcal{F}_{\infty})
L_{v}(Z_{n}^{(2)})^{-\beta}\Bigg)\\
&=\ \wh\me\Bigg(\sum_{|v|=n}\1_{\{\Xi_{n}=v\}}L_{v}(Z_{n}^{(2)})^{-\beta}\Bigg)\ =\ \wh\me\Big(\Pi_{n}(Z_{n}^{(2)})^{-\beta}\Big),
\end{align*}
to obtain by an appeal to Jensen's inequality for $x\mapsto x^{(p-2)/p}$
\begin{align*}
\me (Z_{n}^{(2)})^{p/2}\ &=\   \wh\me\Big(\Pi_{n}\,\wh\me\big((Z_{n}^{(2)})^{-\beta}|\mathcal{G}_{n}\big)\Big)\\
&\ge\ \wh\me\Bigg(\frac{\Pi_n}{\wh\me\big((Z_{n}^{(2)})^{p/2}|\mathcal{G}_{n}\big)^{(2-p)/p}}\Bigg)\\
&\ge\
C\,\wh\me\left(\frac{\Pi_n}{\big(\sum_{k=0}^{n}\Pi_k^ps_{n-k}(2)\big)^{(2-p)/p}}\right)
\end{align*}
for some $C>0$. Recall from Section 2 the definition of $\wh\mmp_{p}$ and that (see \eqref{RadonNikodym})
$$ \wh\mmp^{(p)}(B)\ =\ \frac{1}{m^{n}(p)}\,\wh\me\Big(\Pi_n^{p-1}\1_{B}\Big) $$
for any $B\in\mathcal{F}_{n}$. The last expectation can be further estimated as
\begin{align*}
\wh\me&\left(\frac{\Pi_n}{\big(\sum_{k=0}^{n}s_{n-k}(2)\Pi_k^p\big)^{(2-p)/p}}\right)\\ %\noalign{\break}
&=\ m^{n}(p)\,\wh\me^{(p)}\left(\frac{\Pi_n^{2-p}}{\big(\sum_{k=0}^{n}s_{n-k}(2)\Pi_n^k\big)^{(2-p)/p}}\right)\\
&=\ m^{n}(p)\,\wh\me^{(p)}\left(\frac{1}{\big(\sum_{k=0}^{n}s_{n-k}(2)
(\Pi_n/\Pi_k)^p\big)^{(2-p)/p}}\right)\\ %\noalign{\break}
&=\  m^{n}(p)\,\wh\me^{(p)}\left(\frac{1}{\big(\sum_{k=0}^{n}s_{k}(2)\Pi_k^p\big)^{(2-p)/p}}\right)\\ %\noalign{\break}
&\ge\ m^{n}(p)\,\wh\me^{(p)}\left(\frac{1}{\big(\sum_{k=0}^{n}
m^{k}(p) \Pi_k^p\big)^{(2-p)/p}}\right)\\
&\ge\ m^{n}(p)\,\wh\me^{(p)}\left(\frac{1}{\big(
\sum_{k\ge  0}\big(\Pi_k^*\big)^p  \big)^{(2-p)/p}}\right)\\
\end{align*}
where \eqref{28} has been utilized for the penultimate inequality and where
$\Pi_k^*:= \Pi_{k}/m^{k/p}(p)$ for $k\in\mn_{0}$. Since $\wh\me^{(p)} \Pi_1^\theta =\frac{m(p+\theta)}{m(p)}$ for all $\theta\in\mr$ we find
$$ \wh\me^{(p)}\log \Pi_1^* \ =\ \frac{m'(p)}{m(p)}-\frac{1}{p}\log m(p)\ =\ \frac{h(p)}{p}. $$
Now use $p>\vth$ to infer $\wh\me^{(p)}\ln \Pi_1^*>0$ and thereupon that
$$ 1\ <\ \Sigma\ :=\ \sum_{k\ge 0}(\Pi_k^*)^{-p}\ <\ \infty\quad\wh\mmp_{p}\text{-a.s.}, $$
in particular $\nu:=\wh\me^{(p)}\Sigma^{-(2-p)/p}\in (0,1)$.
We finally arrive at
\begin{equation}\label{29}
m^{n}(p)\ \ge\ s_{n}(2)\ =\ \me Z_{n}^{p/2}\ \ge\ \nu\, m^{n}(p)
\end{equation}
for all $n\in\mn_{0}$ which clearly implies the desired assertion
\eqref{27}. The proof of Proposition \ref{limitchar} is thus
complete.
\end{proof}

The next lemma is needed for the proof of Theorem \ref{main2} and examines the asymptotic behaviour of $\me W_{n}^{p}$ when $\me W_{1}^{p}<\infty$ and $m(p)\geq 1$. It may also be viewed as a useful complement to Proposition \ref{lp}. Let us
mention that we have not tried to obtain the best possible estimates. Actually, for our purposes only factors of exponential growth matter. Hence, we content ourselves with quite crude estimates when dealing with factors of subexponential growth.

\begin{lemma}\label{ma}
Let $p>1$ and $\me W_1^p<\infty$.
\begin{itemize}
\item[(a)] If $p\in (1,2]$, then $\me W_n^p=O(n)$ if $m(p)=1$ and $\me W_n^p=O(m^{n}(p))$ if $m(p)>1$.
\item[(b)] If $p>2$, then $\me W_n^p=O(n^{p-1})$ if $m(p)=1$ and $\me W_n^p=O(n^{b(p-1)}m^{n}(p))$ for $p\in
(b+1,b+2]$, $b\in\mn$, if $m(p)>1$.
\end{itemize}
\end{lemma}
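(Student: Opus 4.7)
The starting point for both parts is the martingale increment representation
$$
d_k\ :=\ W_{k+1} - W_k\ =\ \sum_{|v|=k} L_v\,(W_1(v) - 1),
$$
where the $(W_1(v))_{|v|=k}$ are i.i.d.\ copies of $W_1$ independent of $\mathcal{F}_k$; setting $U := W_1 - 1$ one has $\me U = 0$ and $\me|U|^p < \infty$ under the assumption $\me W_1^p < \infty$.

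\emph{Part (a).} For $p \in (1,2]$, Burkholder's inequality (Lemma \ref{bu}) together with the subadditivity of $x \mapsto x^{p/2}$ gives
$$
\me|W_n - 1|^p\ \le\ C_p\,\me\Bigg(\sum_{k=0}^{n-1} d_k^2\Bigg)^{p/2}\ \le\ C_p\sum_{k=0}^{n-1}\me|d_k|^p.
$$
The von Bahr--Esseen inequality applied conditionally on $\mathcal{F}_k$ yields $\me(|d_k|^p\mid\mathcal{F}_k) \le 2\me|U|^p\,Z_k^{(p)}$, hence $\me|d_k|^p \le 2\me|U|^p\,m(p)^k$. Summing the resulting geometric (or, when $m(p)=1$, arithmetic) progression yields $O(n)$ if $m(p)=1$ and $O(m(p)^n)$ if $m(p)>1$, and the statement of part (a) follows from $\me W_n^p \le 2^{p-1}(1 + \me|W_n-1|^p)$.

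\emph{Part (b).} I proceed by induction on $b \ge 0$, the base case $b=0$ being part (a). For $p \in (b+1, b+2]$ with $b \ge 1$, Burkholder's inequality combined with Minkowski's inequality in $L^{p/2}$ (available because $p/2 > 1$) gives
$$
\me|W_n - 1|^p\ \le\ C_p\Bigg(\sum_{k=0}^{n-1}\|d_k\|_p^2\Bigg)^{p/2}.
$$
Apply Rosenthal's inequality conditionally on $\mathcal{F}_k$ to the sum of conditionally independent mean-zero terms $d_k = \sum_{|v|=k} L_v\,U_v$ to obtain
$$
\me|d_k|^p\ \le\ C_p\Big[(\me U^2)^{p/2}\sigma_k + \me|U|^p\,m(p)^k\Big],\qquad \sigma_k\ :=\ \me\big(Z_k^{(2)}\big)^{p/2}.
$$
The decisive step is to estimate $\sigma_k$ via the inductive hypothesis applied to the WBP with squared weights $\mathbf{L}^2$: its mean-one martingale is $W_k^{(2)} = Z_k^{(2)}/m(2)^k$ and the associated $m$-function is $\tilde m(r) := m(2r)/m(2)^r$, giving $\tilde m(p/2) = m(p)/m(2)^{p/2}$. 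Strict logconvexity of $\log m$ (a consequence of \eqref{tr} together with $m(1)=1$) forces $\tilde m(p/2) \ge 1$ precisely when $m(p) \ge 1$ and $p > 2$, and since $p/2$ falls into a strip $(b'+1, b'+2]$ with $b' < b$, the inductive bound applies and produces $\me(W_k^{(2)})^{p/2} = O(k^c \tilde m(p/2)^k)$ for a suitable subexponential exponent $c$. Therefore $\sigma_k = m(2)^{kp/2}\me(W_k^{(2)})^{p/2} = O(k^c m(p)^k)$; plugging back into the Burkholder--Minkowski bound and summing the resulting (geometric times polynomial) series delivers a bound of the announced form.

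The principal technical difficulty is bookkeeping the subexponential polynomial factors through the nested induction so that they remain within the envelope $n^{b(p-1)}$; since, as the authors stress, only the exponential rate $m(p)^n$ is material for the later applications, crude estimates on the polynomial exponent suffice. A secondary point needing verification is the base moment condition $\me(W_1^{(2)})^{p/2} < \infty$ required to launch the induction on the squared WBP, which follows from the Cauchy--Schwarz estimate $(\sum_i L_i^2)^{p/2} \le (\sum_i L_i)^p = W_1^p$ together with $\me W_1^p < \infty$.
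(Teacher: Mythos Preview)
Your approach is sound but takes a genuinely different route from the paper's. For part~(a) the paper invokes the spinal estimate Proposition~\ref{spine}(e) with $f(x)=x^{p-1}$ to obtain directly $\me W_n^p\le \wh\me Q^{p-1}\sum_{k<n}m(p)^{k}$; you instead decompose into increments and use Burkholder plus von Bahr--Esseen, which works equally well. For part~(b) the paper derives, via size-biasing (equivalently, by differentiating Laplace transforms), the distributional recursion $V_n\od MV_{n-1}+T_n$ for the size-biased variable $V_n$ with $\me V_n^{p-1}=\me W_n^p$, and then Minkowski in $L_{p-1}$ together with $\|T_n\|_{p-1}\le\|W_{n-1}\|_{p-1}(\me W_1^p)^{1/(p-1)}$ steps the induction from $p-1$ to $p$. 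Your route---Burkholder, Rosenthal on $d_k$, and reduction of $\sigma_k=\me(Z_k^{(2)})^{p/2}$ to the squared WBP at exponent $p/2$---halves $p$ at each step instead and bypasses the size-biasing machinery entirely, at the price of importing Rosenthal's inequality. Note that your induction is implicitly over \emph{all} WBPs (since each step changes the underlying weights), which should be stated.

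One point deserves more care than you give it. In the case $m(p)=1$ you correctly observe $\tilde m(p/2)>1$, so the inductive input is the ``$>1$'' bound for the squared WBP. If you feed in the lemma's \emph{stated} exponent $k^{b'(p/2-1)}$ here, the Burkholder--Minkowski step outputs exponent $b'(p/2-1)+p/2$, which already exceeds the target $p-1$ once $p>6$ (where $b'\ge 2$). Your method does deliver exponents inside the stated envelope, but only because the bounds it actually produces in the ``$>1$'' case are sharper than $k^{b'(p/2-1)}$; you must track those sharper bounds through the induction rather than appeal to the lemma's cruder ones. This is precisely the bookkeeping you flag as the principal difficulty, and it is not quite as innocuous as ``crude estimates suffice'' suggests.
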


We note in advance that the lemma will later be applied to the
martingale $\{W_{n}^{(2)}:n\in\mn_{0}\}$ rather than to $\{W_n:n\in\mn_{0}\}$.

\begin{proof}
(a) Use Proposition \ref{spine}(e) with $f(x)=x^{p-1}$ to infer
\begin{align*}
\me W_n^p\ &\le\ \wh\me \left(\sum_{k=0}^{n-1}M_1 M_2\cdots M_k
Q_{k+1}\right)^{p-1}\\
&\le\ \wh\me \left(\sum_{k=0}^{n-1}(M_1M_2\cdots
M_k)^{p-1} Q_{k+1}^{p-1}\right)\ =\ \wh\me Q^{p-1}\sum_{k=0}^{n-1}\wh\me ^{k}M^{p-1},
\end{align*}
where in the next to last inequality the subadditivity of $f$ has
been utilized and $(M,Q)$ denotes a generic copy of the
$(M_{n},Q_{n})$. In view of Proposition \ref{spine}, $\wh \me
Q^{p-1}=\me W_1^p$ and $\wh \me M^{p-1}=m(p)$, and the result
follows.

\vspace{.2cm}\noindent
(b) Put $\varphi_n(s):=\me e^{-sW_n}$ for $n\in\mn_{0}$ Then
$$ \varphi_n(s)=\me \prod_{|v|=1}\varphi_{n-1}(sL_v),\quad n\in\mn.$$
Differentiating this equality yields
\begin{equation}\label{ik}
\varphi_n^\prime(s)\ =\ \sum_{|v|=1}\varphi_{n-1}^\prime(sL_v)L_v\prod_{u\ne
v}\varphi_{n-1}(sL_u),\quad n\in\mn.
\end{equation}
It is known and readily checked that $-\varphi_{n}'(s)$ is the Laplace transform of $W_n$ under the size-biased measure $\wh\mmp$. Let $V_{n}$ be a random variable with $\mmp(V_{n}\in\cdot)=\wh\mmp(W_{n}\in\cdot)$ (the use of $V_{n}$ is for our convenience and allows us to do all subsequent calculations under $\mmp$ only).
Then (\ref{ik}) is equivalent to the distributional identity
\begin{equation}\label{rec}
V_n\ \od\ MV_{n-1}+T_n,\quad n\in\mn,
\end{equation}
where $(M,T_n)$ is a random vector independent of $V_{n-1}$ and
with distribution
\begin{align}\label{defi2}
\begin{split}
\mmp\{(M, T_n)\in B\}\ &=\
\me\left(\sum_{|u|=1}L_{u}\1_B\Bigg(L_{u}, \sum_{v\ne
u}L_{v}W_{n-1}(v)\Bigg)\right)\\
&=\ \wh\mmp\left\{\Bigg(\Pi_{1},\sum_{v\in\mathcal{I}_{1}}
L_{v}W_{n-1}(v)\Bigg)\in B\right\},\quad n\in\mn,
\end{split}
\end{align}
where $B\subset \mr^2$ is any Borel set. An application of
Minkowski's inequality in $L_{p-1}$ yields
\begin{equation}\label{equ}
||V_n||_{p-1}\ \le\ ||M||_{p-1}||V_{n-1}||_{p-1}+||T_n||_{p-1}.
\end{equation}
Arguing in the same way as in the proof of Lemma 4.2 in
\cite{Liu2000} one finds that
\begin{equation}\label{inter1}
||T_n||_{p-1}\ \le\ ||W_{n-1}||_{p-1} (\me W_1^p)^{1/(p-1)}.
\end{equation}
For the remaining discussion we distinguish between two cases:

\vspace{.2cm}\noindent
{\sc Case 1}. $m(p)=1$

\vspace{.1cm}\noindent Note that
$||M||^{p-1}_{p-1}=\wh\me\Pi_{1}^{p-1}=m(p)=1$ and that $m(p-1)<1$
(by log-convexity of $m$). Hence $\sup_{n\ge
0}||W_n||_{p-1}<\infty$ by Proposition \ref{lp}, and we obtain
from (\ref{equ}) that $||V_n||_{p-1}=O(n)$ or, equivalently,
$$\me W_n^p\ =\ \me V_n^{p-1}\ =\ O(n^{p-1}). $$

\vspace{.2cm}\noindent
{\sc Case 2}. $m(p)>1$

\vspace{.1cm}\noindent Assume first that $p\in(2,3]$. Then we
conclude from (\ref{inter1}) and the already established part of
the lemma that $||T_n||_{p-1}=O(m^{n/(p-1)}(p))$, regardless of
the (finite) value of $m(p-1)$. By (\ref{defi2}),
$||M||_{p-1}=m^{1/(p-1)}(p)$ whence we conclude from (\ref{equ})
that $||V_n||_{p-1}=O(nm^{n/(p-1)}(p))$ or, equivalently, that
$$ \me W_n^p\ =\ \me V_n^{p-1}\ =\ O(n^{p-1}m^{n}(p)).$$
The subsequent proof proceeds by induction over $b$. Suppose that
we have already verified that $\me W_n^p=O(n^{b(p-1)}m^{n}(p))$
when $p\in (b+1,b+2]$. In order for proving $\me
W_n^p=O(n^{(b+1)(p-1)}m^{n}(p))$ when $p\in (b+2,b+3]$ it suffices
to note that $||T_n||_{p-1}=O(n^b m^{n/(p-1)}(p))$ and that any
solution to the recursive inequality
$$ c_{n}\ \le\ dc_{n-1}+O(n^{b} d^{n}),\quad n\in\mn,\ c_0=1, $$
with $d>1$ satisfies $c_n=O(n^{b+1}d^n)$. This completes the proof.
\end{proof}

We mention in passing that the distributional identity \eqref{rec}, obtained above with the help of Laplace transforms (see \eqref{ik}), may also be derived by a probabilistic
argument using the results stated in Section 2 on size biasing and spinal trees. However,
we refrain from supplying further details.

As we will make multiple use of the following version of
Burkholder's inequality (see \cite[Theorem 1 on p.~396]{Chow}), it is
stated here for ease of~reference.
\begin{lemma}\label{bu}
Let $p>1$ and $\{Z_n: n\in\mn\}$ be a martingale with $Z_{0}=0$ and a.s.\ limit $A$. Then $\me |Z|^p< \infty$ if and only if $\me
\left(\sum_{n\ge 0} (Z_{n+1}-Z_n)^2\right)^{p/2}<\infty$.
If one of these holds then
$$ c_p \left\|\left(\sum_{n\ge 0}
(Z_{n+1}-Z_n)^2\right)^{1/2}\right\|_p\ \le\ \|Z\|_p\ \le\ C_p
\left\|\left(\sum_{n\ge 0} (Z_{n+1}-Z_n)^2\right)^{1/2}\right\|_p, $$
where $c_p:=(p-1)/(18p^{3/2})$ and $C_p:=18p^{3/2}/(p-1)^{1/2}$.
\end{lemma}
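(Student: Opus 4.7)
Since Lemma \ref{bu} is Burkholder's classical square-function inequality, stated verbatim in \cite[Theorem 1, p.~396]{Chow}, the plan is simply to import that argument. Its architecture rests on the three-way comparison $\|Z\|_{p}\asymp\|Z^{*}\|_{p}\asymp\|S\|_{p}$, where $Z^{*}:=\sup_{n}|Z_{n}|$ and $S:=(\sum_{n\ge 0}(Z_{n+1}-Z_{n})^{2})^{1/2}$, from which one reads off both the finiteness equivalence and the constants.

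The first equivalence $\|Z\|_{p}\asymp\|Z^{*}\|_{p}$ is Doob's $L_{p}$-maximal inequality, giving $\|Z\|_{p}\le\|Z^{*}\|_{p}\le\frac{p}{p-1}\|Z\|_{p}$ for $p>1$. The nontrivial equivalence $\|Z^{*}\|_{p}\asymp\|S\|_{p}$ is Burkholder's original inequality, proved via the pair of good $\lambda$-inequalities
\begin{equation*}
\mmp\{S>\beta\lambda,\,Z^{*}\le\delta\lambda\}\le\eps\,\mmp\{S>\lambda\}\quad\text{and}\quad\mmp\{Z^{*}>\beta\lambda,\,S\le\delta\lambda\}\le\eps\,\mmp\{Z^{*}>\lambda\},
\end{equation*}
each obtained by stopping the martingale at the first exit time of the relevant process from a threshold, then applying Chebyshev's inequality to the orthogonal $L_{2}$-increments up to the stopping time. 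Multiplying these inequalities by $p\lambda^{p-1}$ and integrating in $\lambda$ converts the tail bounds into $L_{p}$-norm bounds; for $\eps$ small enough, the resulting tail on the right is absorbed into the left, yielding the desired two-sided estimate.

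The main obstacle to reproducing the lemma in precisely the form stated is the explicit tracking of the numerical constants. The particular values $c_{p}=(p-1)/(18p^{3/2})$ and $C_{p}=18p^{3/2}/(p-1)^{1/2}$ arise in \cite{Chow} by a specific optimization of the parameters $\beta,\delta,\eps$ in the good $\lambda$-argument, combined with the $p/(p-1)$ factor from Doob's inequality; they are not the sharp Burkholder constants but are fully sufficient for the applications in Section~3 above (in particular, for comparing $\|\wh A_{n}\|_{p}$ to $\|(\sum_{k}e^{2ak}(W_{k+1}-W_{k})^{2})^{1/2}\|_{p}$ in the proofs of Theorems \ref{main1} and \ref{main2}). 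Since the result is quoted directly from \cite{Chow}, we take it as given.
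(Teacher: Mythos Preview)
Your proposal is correct and aligns with the paper's treatment: the paper provides no proof of Lemma \ref{bu} at all, merely stating it ``for ease of reference'' with a citation to \cite[Theorem 1, p.~396]{Chow}. Your proposal likewise defers to that citation, supplementing it with a brief (and accurate) sketch of the standard good-$\lambda$ argument, which is more than the paper itself offers.
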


%To simplify reading we decided to collect all the notation here,
%at the end of Introduction.

\section{Proofs of Theorem \ref{main1} and \ref{main2}}

Before proceeding with the proof of the main results, put $\mu_{p}:=\me|W_{1}-1|^{p}$, $R:=\sum_{n\ge 0}e^{2an}(W_{n+1}-W_{n})^{2}$ and recall that $W_{n}(v)$ denotes the copy of $W_{n}$ pertaining to the subtree $v\mathbf{V}$ rooted at $v$. Then
\begin{equation}\label{mincr}
W_{n+1}-W_{n}\ =\ \sum_{|v|=n}L_{v}(W_{1}(v)-1)
\end{equation}
for $n\in\mn_{0}$. Let us also stipulate hereafter that $C\in (0,\infty)$ denotes a generic constant which may differ from line to line.

\begin{proof}[Proof of Theorem \ref{main1}]
In view of Burkholder's inequality (Lemma \ref{bu}) it is clear that $L_{p}$-convergence of $\wh{A}=\sum_{n\ge 0}e^{an}(W_{n+1}-W_{n})$ holds true if, and only if, $R$ exists in $L_{p/2}$. Suppose the latter be true and recall that $\gamma=\inf\{m^{1/r}(r):r\in [1,2]\}$. Then, by a double use of Jensen's inequality in combination with Burkholder's inequality,
\begin{align*}
\me R^{p/2}\ &=\ \me\left(\sum_{n\ge 0}\Bigg(\sum_{|v|=n}e^{an}L_{v}(W_{1}(v)-1)\Bigg)^{2}\right)^{p/2}\\ %\noalign{\break}
&\ge\ N^{p/2}\,\me\left(\frac{1}{N}\sum_{n=0}^{N-1}\Bigg(\sum_{|v|=n}e^{an}L_{v}(W_{1}(v)-1)\Bigg)^{2}\right)^{p/2}\\ %\noalign{\break}
&\ge\ \frac{1}{N^{1-p/2}}\sum_{n=0}^{N-1}\me\Bigg|\sum_{|v|=n}e^{an}L_{v}(W_{1}(v)-1)\Bigg|^{p}\\ %\noalign{\break}
&\ge\ \frac{C}{N^{1-p/2}}\sum_{n=0}^{N-1}\me\Bigg(\sum_{|v|=n}e^{2an}L_{v}^{2}(W_{1}(v)-1)^{2}\Bigg)^{p/2}\\ %\noalign{\break}
&=\ \frac{1}{N^{1-p/2}}\sum_{n=0}^{N-1}e^{pan}\,\me\left(Z_n^{(2)}
\sum_{|v|=n}\frac{L_{v}^{2}}{Z_n^{(2)}}(W_{1}(v)-1)^2\right)^{p/2}\\
&\ge\ \frac{C\mu_{p}}{N^{1-p/2}}\sum_{n=0}^{N-1}e^{pan}\,\me\big(Z_n^{(2)}\big)^{p/2}\\
&=\ \frac{C\mu_{p}}{N^{1-p/2}}\sum_{n=0}^{N-1}e^{pan}s_{n}(2).
\end{align*}
This proves necessity of $\mu_{p}<\infty$ and
$\lim_{n\to\infty}e^{pa}s^{1/n}_{n}(2)\le 1$. But the last limit
equals either $e^{pa}\gamma^{p}$ (if $p\le\vth$) or $e^{pa}m(p)$
by an appeal to Proposition \ref{limitchar}. Therefore we have
proved necessity of $e^{a}\inf_{r\in [p,2]}m^{1/r}(r)\le 1$ as
claimed.

\vspace{.3cm}\noindent
Now suppose that $\mu_{r}<\infty$ and $e^{a}m^{1/r}(r)<1$ for some $r\in [p,2]$. By an appeal to Burkholder's inequality in combination with the subadditivity of $x\mapsto x^{p/2}$ and $x\mapsto x^{r/2}$, we infer
\begin{align*}
\me R^{p/2}\ &\le\ \sum_{n\ge 0}e^{pan}\,\me|W_{n+1}-W_{n}|^{p}\\
&\le\ C\sum_{n\ge 0}e^{pan}\,\me\Bigg(\sum_{|v|=n}L_{v}^{2}(W_{1}(v)-1)^{2}\Bigg)^{p/2}\\
&\le\ C\sum_{n\ge 0}e^{pan}\,\me\Bigg(\sum_{|v|=n}L_{v}^{r}|W_{1}(v)-1|^{r}\Bigg)^{p/r}.
\end{align*}
Use Jensen's inequality to see that
$$ \me\Bigg(\sum_{|v|=n}L_{v}^{r}|W_{1}(v)-1|^{r}\Bigg)^{p/r}\ \le\ \mu_{r}^{p/r}\,m^{p/r}(r) $$
and thus
$$ \me R^{p/2}\ \le\ C\mu_{r}^{p/r}\sum_{n\ge 0}e^{pan}m^{p/r}(r)\ <\ \infty. $$
This completes the proof of Theorem \ref{main1}.
\end{proof}

\begin{rem}\label{sharper}\rm
If $p>\vth$, i.e.\ $m^{1/\vth}(\vth)<m^{1/p}(p)=\min_{r\in [p,2]}m^{1/r}(r)$, the proof of Proposition \ref{limitchar} (see \eqref{29}) has actually shown that
$\nu m^{n}(p)\le s_{n}(2)\le m^{n}(p)$ for all $n\in\mn_{0}$ and some $\nu\in (0,1)$. In this case we hence obtain
\begin{align*}
\me R^{p/2}\ &\ge\ \frac{\mu_{p}}{N^{1-p/2}}\sum_{n=0}^{N-1}e^{pan}s_{n}(2)
\ \ge\ \frac{\nu\mu_{p}}{N^{1-p/2}}\sum_{n=0}^{N-1}e^{pan}m^{n}(p)
\end{align*}
and thereby conclude that the $L_{p}$-convergence of $\wh{A}$ or,
equivalently, $\me R^{p/2}<\infty$ can only hold true if
$e^{a}m^{1/p}(p)<1$. In the case where the function $r\mapsto
m^{1/r}(r)$ attains its minimum at some $\theta\ge 2$, we arrive
at a similar conclusion, because then
$\lim_{n\to\infty}m^{-np/2}(2)s_{n}(2)$ exists and is positive by
\eqref{30}. This confirms our assertions stated in Remark
\ref{supplement}.
\end{rem}

\begin{proof}[Proof of Theorem \ref{main2}]
We show first necessity of condition (\ref{1}) and thus assume
that the series $A$ in (\ref{series}) converges in $L_p$. By Lemma \ref{asa} and Remark \ref{rem2.3}(a), the same holds true for $\wh{A}$ and by an appeal to Lemma \ref{bu}
\begin{equation}\label{de}
\me R^{p/2}<\infty
\end{equation}
As $p\ge 2$, the function $x\mapsto x^{p/2}$ is superadditive and thus
\begin{equation}\label{3}
\sum_{n\ge 0} e^{pan}\,\me |W_{n+1}-W_n|^p\ \le\ \me
R^{p/2}<\infty.
\end{equation}
It is clear from \eqref{mincr} that $W_{n+1}-W_n$ is
the a.s.\ limit of a martingale (see, for example, \cite[Section
3]{AlsKuhl} for more details). Consequently, by another appeal to Lemma \ref{bu} and the afore-mentioned superadditivity,
\begin{align*}
\me |W_{n+1}-W_n|^p\ &\ge\ C\, \me\left(\sum_{|v|=n}L_v^2(W_1(v)-1)^2\right)^{p/2}\\
&\ge\ C\,\me\left(\sum_{|v|=n}L_v^p |W_1(v)-1|^p\right)\\
&=\ C\mu_{p}m^{n}(p)
\end{align*}
for $n\in\mn_{0}$ This inequality together with (\ref{3}) implies the necessity of $\me W_1^p<\infty$ and $e^{ap}m(p)<1$ for the $L_p$-convergence of $A$. Moreover, by Jensen's inequality,
\begin{align*}
\me\left(\sum_{|v|=n}L_v^2(W_1(v)-1)^2\right)^{p/2}\ &\ge
\ \left(\me\left(\sum_{|v|=n}L_v^2(W_1(v)-1)^2\right)\right)^{p/2}\\
&\ =\ \mu_{2}^{p/2}m(2)^{n}
\end{align*}
which together with (\ref{3}) finally gives the asserted necessity of
$e^{a}m^{1/2}(2)<1$.

\vspace{.2cm}\noindent
Let us now turn to the sufficiency of conditions (\ref{1}). By Lemma \ref{asa} and Remark \ref{rem2.3}(a) it suffices to verify $L_{p}$-convergence of $\wh{A}$. By combining \eqref{mincr}, another use of Lemma \ref{bu}, the convexity of $x\mapsto x^{p/2}$ and a conditioning with respect to $\mathcal{F}_n$, we infer
\begin{align*}
\me |W_{n+1}-W_n|^p\ &\le\ C\,\me\Bigg(\sum_{|v|=n}L_v^2(W_1(v)-1)^2\Bigg)^{p/2}\\
&\le\ C\,\me\Bigg(Z_n^{(2)}\sum_{|v|=n}\frac{L_v^2}{Z_n^{(2)}}(W_1(v)-1)^2\Bigg)^{p/2}\\
&\le\ C\mu_{p}\,\me\big(Z_n^{(2)}\big)^{p/2}
\end{align*}
whence it is enough to verify
\begin{equation}\label{inter}
e^{pan}\,\me\big(Z_n^{(2)}\big)^{p/2}\ =\ O(q^n)\quad
\text{for some }q\in (0,1),
%A:=\sum_{n\ge 0} b_n^p \me
%\left(\sum_{|v|=n}L_v^2\right)^{p/2}<\infty.
\end{equation}
hereafter. Indeed, it then follows that
\begin{equation}\label{im}
e^{2an}\me^{2/p}|W_{n+1}-W_{n}|^{p}\ =\ O(q^{2n/p}).
\end{equation}
and thereby with the help of Minkowski's inequality in $L_{p/2}$ and once more
Lemma \ref{bu}
$$ \me|\wh{A}|^{p}\ \le\ C\,\me R^{p/2}\le\ C\left(\sum_{n\ge 0}
e^{2an}\me^{2/p}|W_{n+1}-W_n|^{p})\right)^{p/2}\ <\ \infty. $$

So let us prove (\ref{inter}) for the case $p>2$, for it trivially
holds with $q=e^{2a}m(2)$ in the case $p=2$. %Recall that Recall
%that
%$$ \me\big(W_{1}^{(2)}\big)^{r}\ =\ \me\left(\sum_{|v|=1}\Bigg(\frac{L_v^2}{m(2)}\Bigg)^r\right)\ =
%\ \frac{m(2r)}{m^r(2)}$$ and
Notice that $\me\big(W_{1}^{(2)}\big)^{p/2}\le m^{-p/2}(2)\me
W_1^p <\infty$. For the remaining discussion we distinguish two
cases:

\vspace{.2cm}\noindent {\sc Case 1}.  $m(p)<m^{p/2}(2)$.

\vspace{.2cm}\noindent Then the second condition in (\ref{1})
reads $e^{a}m^{1/2}(2)<1$. By Proposition \ref{lp} applied to
$W_{n}^{(2)}$ and $p/2$ instead of $W_n$ and $p$, we obtain
$\sup_{n\ge 0}\me\big(W_{n}^{(2)}\big)^{p/2}<\infty$ and  this
ensures validity of (\ref{inter}) with $q=e^{ap}m^{p/2}(2)$.

\vspace{.2cm}\noindent {\sc Case 2}.  $m(p)\ge m^{p/2}(2)$.

\vspace{.2cm}\noindent
Then the second condition in (\ref{1}) takes the form $e^{a}m^{1/p}(p)$ $<1$. Lemma \ref{ma} applied to $W_{n}^{(2)}$ and $p/2$ instead of $W_n$ and $p$ provides us with $\me\big(W_{n}^{(2)}\big)^{p/2}=O(n^c m^{n}(p)m(2)^{-pn/2})$ for $c>0$. Consequently, $\me\big(Z_{n}^{(2)}\big)^{p/2}=O(n^c m^{n}(p))$ which proves validity of (\ref{inter}) with  $q=\delta e^{pa}m(p)$ for
some $\delta>1$ sufficiently close to $1$. The proof is herewith
complete.
\end{proof}

\end{document}